\documentclass[11pt]{amsart}
\usepackage{amsfonts}
\usepackage{amsmath}
\usepackage{amssymb}
\usepackage{graphicx}
\usepackage{amscd}
\usepackage{calc}
\usepackage{amsthm}

\newtheorem{theorem}{Theorem}[section]

\newtheorem{corollary}{Corollary}[section]

\newtheorem{lemma}{Lemma}[section]

\numberwithin{equation}{section}
\theoremstyle{remark}
\newtheorem{rmk}{Remark}[section]

\renewcommand{\bar}{\overline}

\newcommand{\pa}{\partial}
\renewcommand{\phi}{\varphi}
\newcommand{\wt}{\widetilde}

\newcommand{\ka}{K\"ahler }

\newcommand{\he}{Hermitian-Einstein }

\newcommand{\R}{{\mathbb R}}

\newcommand{\M}{{\mathcal M}}

\newcommand{\ke}{K\"ahler-Einstein }

\newcommand{\hh}{{\mathcal H}}

\newcommand{\lb}{\left (}
\newcommand{\rb}{\right )}
\newcommand{\lsb}{\left [}
\newcommand{\rsb}{\right ]}
\newcommand{\lfb}{\left \{}
\newcommand{\rfb}{\right \}}

\newcommand{\ga}{\alpha}

\newcommand{\gm}{\gamma}
\newcommand{\gd}{\delta}

\newcommand{\ii}{\sqrt{-1}}

\newcommand{\tr}{\text{Tr}}

\newcommand{\ud}{\underline}
\newcommand{\End}{\text{End}}

\title{The  Hermitian-Yang-Mills Iteration on Stable Bundles}

\author{Huai-Dong Cao$^{\dagger}$, Xiaofeng Sun, and Yingying Zhang}
\address{Department of Mathematics,  Lehigh University,
Bethlehem, PA 18015, USA}
\email{huc2@lehigh.edu; xis205@lehigh.edu}
\address{
Beijing Institute of Mathematical Sciences and Applications, Beijing, 101408,
China. }
\email{zhangyingying@bimsa.cn}

\thanks{$^{\dagger}$Research supported in part by a Simons Fellowship and a grant from the Institute for Advanced Study School of Mathematics during the Spring 2026 term.}

\begin{document}

\begin{abstract}
In this paper, based on recent results for the prescribed Hermitian-Yang-Mills (HYM) tensor \cite{WYY1} and its twisted variants \cite{WYY3}, we provide a dynamical construction of Hermitian-Einstein metrics on stable holomorphic vector bundles and its extension to Higgs bundles. Additionally, in the appendix, we use the heat flow method to give a new proof of the existence and uniqueness of solutions to the twisted prescribed HYM tensor equation from \cite{WYY3}, as well as its generalization to Higgs bundles.
\end{abstract}

\maketitle


\section{Introduction}\label{s10}

 Let $E\to X$ be a holomorphic vector bundle of rank $r$ over a compact K\"ahler manifold $(X,\omega)$ of complex dimension $n$. Given a Hermitian metric $h$ on $E$, the curvature $R^h$ of its Chern connection is an $\End(E)$-valued $(1,1)$-form.
The {\it slope} of $E $ is defined as
\[
\mu(E)=\frac{\deg(E)}{\operatorname{rk}(E)},
\]
where $\deg(E)$ is the {\it degree} of $E$ given by
\[
\deg(E) = \int_X c_1(E)\wedge
\frac{\omega^{n-1}}{(n-1)!} =\frac{1}{\pi} \int_X \tr (R^h)\wedge\frac{\omega^{n-1}}{(n-1)!}
\]
and this number is independent of the chosen Hermitian metric $h$.

$E$ is said to be {\it stable} if, for every proper coherent subsheaf $\mathcal{F}$ of $E$,
\[
\mu(\mathcal{F})<\mu(E).
\]
Also, a Hermitian metric $h$ on $E$ is called a {\it Hermitian-Einstein metric} (or its Chern connection is an {\it Hermitian-Yang-Mills connection}) if
\begin{equation}\label{5}
\Lambda_\omega R^h =\mu \, I_E
\end{equation}
for some real constant $\mu$. Here, $\Lambda_\omega$ denotes contraction with the K\"ahler form $\omega$ and $I_E\in \End(E)$ is the identity.

The fundamental link between algebraic geometry (stable vector bundles) and differential geometry (Hermitian-Einstein metrics) is provided by the celebrated Donaldson-Uhlenbeck-Yau theorem \cite{sd1, sd2, UY}, which states that $E$ admits a Hermitian-Einstein metric if and only if $E$ is (poly)stable.  Over the years, the Donaldson-Uhlenbeck-Yau theorem has been generalized to holomorphic vector bundles over the non-K\"ahler setting by Li and Yau \cite{LY} and to Higgs bundles by Simpson \cite{sim1988}.

Very recently, Wang-Yang-Yau \cite{WYY3} solved the twisted prescribed Hermitian-Yang-Mills tensor problem. Precisely,  they proved the following result.

\begin{theorem}\textup{\bf (Wang-Yang-Yau \cite{WYY3})}\label{05}
Let $E$ be a holomorphic vector bundle over a compact K\"ahler manifold $(X,\omega)$. Then,
\begin{eqnarray}\label{08}
\inf_{\mathcal F}\mu\lb E/\mathcal F\rb=\sup_{\Lambda_\omega R^h>\lambda I_E}\lambda,
\end{eqnarray}
where the infimum is taken over all proper saturated torsion-free subsheaves of $E$ and the supremum is taken over all Hermitian metrics on $E$. This number is denoted by $\lambda_{\min}$.

Moreover,  for any $\lambda<\lambda_{\min}$ and any Hermitian metric $\xi$ on $E$, the equation
\begin{eqnarray}\label{09}
  \Lambda_\omega R^h=\lambda I_E+\xi h^{-1}
\end{eqnarray}
has a unique solution.
\end{theorem}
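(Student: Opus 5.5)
Throughout, the equation \eqref{09} is read as $\Lambda_\omega R^h=\lambda I_E+\xi h^{-1}$, with $\xi h^{-1}$ denoting the endomorphism $h^{-1}\xi$, which is $h$-selfadjoint and positive definite. The plan is to prove the two parts of Theorem \ref{05} in reverse order. First I establish existence and uniqueness for \eqref{09} whenever $\lambda<\inf_{\mathcal F}\mu(E/\mathcal F)$. The equality \eqref{08} then follows from two inequalities: an easy Chern--Weil bound, and the fact that a solution of \eqref{09} automatically satisfies $\Lambda_\omega R^h-\lambda I_E=\xi h^{-1}>0$.

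\textbf{Inequality $\sup\le\inf$.} Suppose $\Lambda_\omega R^h>\lambda I_E$ and let $\mathcal F$ be a saturated subsheaf. Then $Q=E/\mathcal F$ is torsion free and is a locally free quotient off a set of codimension at least $2$. Give $Q$ the quotient metric. Curvature increases in quotients, so $\Lambda_\omega R^{Q}=\pi(\Lambda_\omega R^h)\pi^* + \Lambda_\omega(\beta\wedge\beta^*)$-type terms, and the second fundamental form term contributes nonnegatively to the trace. Integrating the trace, in the standard way (the Chern--Weil formula extends across the singular set as in Uhlenbeck--Yau or Bando--Siu), gives $\deg Q\ge \lambda\,\mathrm{rk}\,Q$ up to the normalizing constants in the definition of $\deg$. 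Hence $\mu(E/\mathcal F)\ge\lambda$, and so $\sup\le\inf$.

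\textbf{Solving \eqref{09}.} I use a continuity or heat-flow method in $h$. Fix a background metric $h_0$ and write $h=h_0e^{s}$. Consider the flow $h^{-1}\partial_t h=-(\Lambda_\omega R^h-\lambda I_E-\xi h^{-1})$. For uniqueness and the maximum principle, note that the zeroth-order term $-\xi h^{-1}$ is monotone: increasing $h$ decreases $\xi h^{-1}$. This makes the operator "proper" in the sense of Donaldson's distance function $\sigma(h,k)=\tr(h^{-1}k)+\tr(k^{-1}h)-2r$, which should satisfy $\Delta\sigma\ge0$ for two solutions. The monotone term then forces $h=k$ at a maximum point; this gives uniqueness, and the same computation gives a $C^0$-contraction along the flow. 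Long-time existence and $C^1$, $C^2$ estimates given a $C^0$ bound are standard (Donaldson, Simpson).

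The \emph{main obstacle} is the uniform $C^0$ bound on $s$. I would handle it by Uhlenbeck--Yau/Simpson's contradiction argument. If $\|s_t\|_{L^\infty}\to\infty$, normalize $u=s/\|s\|$ and pass to a weak limit $u_\infty$. Its eigenvalues are constant, and its spectral projections produce weakly holomorphic subbundles, hence saturated subsheaves $\mathcal F_i$, via Uhlenbeck--Yau regularity. The twisted Donaldson functional $M(h)=M_D(h_0,h)-\lambda\int\log\det(h_0^{-1}h)+\int\Phi(\xi,h)$ then has to be analyzed. The new term $\Phi$ has derivative $\tr(\xi h^{-1}\delta h\,h^{-1})$-like, and I need its asymptotic slope along $e^{tu_\infty}$. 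In directions where $h$ blows up, the term $\xi h^{-1}$ decays, so the twisting is asymptotically negligible. In directions where $h$ shrinks, it blows up favorably, which should rule out $u_\infty$ having a negative part, or at least give the right sign. The contradiction should then read $\sum(\text{gaps})\,(\deg(E/\mathcal F_i)-\lambda\,\mathrm{rk}(E/\mathcal F_i))\le0$, against the hypothesis $\lambda<\mu(E/\mathcal F_i)$. Note that the quotient slopes appear here, not the subsheaf slopes. This reflects the sign asymmetry of the positive term $\xi h^{-1}$, which is why no normalization $\det h$ fixed is needed: the twisting already breaks the scaling symmetry. Getting this slope computation right is the step I expect to need the most care.

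\textbf{Inequality $\inf\le\sup$.} Given $\lambda<\lambda_{\min}:=\inf\mu(E/\mathcal F)$, the solution $h$ of \eqref{09} with, say, $\xi=h_0$ satisfies $\Lambda_\omega R^h-\lambda I_E=\xi h^{-1}>0$. So $\sup\ge\lambda$, and letting $\lambda\uparrow\lambda_{\min}$ gives equality in \eqref{08}.
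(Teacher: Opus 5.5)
Your proposal is sound and uses the same heat-flow framework as the paper's appendix (Theorem \ref{a105}). The decisive $C^0$ estimate, however, comes from a genuinely different mechanism.

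\textbf{Comparison with the paper.} The paper fixes a metric $\hat h$ with $\Lambda_\omega R^{\hat h}-\lambda I_E>0$ and bounds $\tr(h_t\hat h^{-1})$ and $\tr(\hat h h_t^{-1})$ by the maximum principle. The first bound uses that positivity; the second uses only $\xi\ge C_4\hat h$. It then gets exponential decay of $\|K_t\|_{h_t}^2$ from $\xi h_t^{-1}\ge\delta I_E$. This is short and elementary, but the existence of $\hat h$ is precisely the statement that $\lambda$ lies below the supremum in \eqref{08}. So the paper reproves solvability while taking the nontrivial inequality $\inf_{\mathcal F}\mu(E/\mathcal F)\le\sup\lambda$ from \cite{WYY3}. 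Your Uhlenbeck--Yau/Simpson blow-up uses only the algebraic hypothesis $\lambda<\mu(E/\mathcal F)$. Together with your Chern--Weil bound (which the paper does not spell out), it therefore proves both inequalities in \eqref{08}, at the cost of the weak-limit and regularity machinery.

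Your $\sigma$-uniqueness argument is also correct. At a point where $h=I_E$ and $k=\mathrm{diag}(d_1,\dots,d_r)$ one gets $\Delta\sigma\ge\sum_i\xi_{i\bar i}(d_i-1)^2(d_i+1)d_i^{-2}$. This fills in a point the paper leaves implicit.

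\textbf{Making the slope discussion rigorous.} To replace your hedged slope discussion with a proof, reorder the $C^0$ estimate.
First obtain $h_t\ge c\,\hat h$ from the paper's maximum principle for $\tr(\hat h h_t^{-1})$. That computation works for any background $\hat h$, since the constant $\sup\lambda_{\max}(\Lambda_\omega R^{\hat h}-\lambda I_E)$ may have either sign. This lower bound does two things:
\begin{enumerate}
\item Together with the bound on $K_t$ from \eqref{a200}, it controls $\Lambda_\omega R^{h_t}=K_t+\lambda I_E+\xi h_t^{-1}$. That control is exactly what Simpson's inequality $\sup_X|s|\le C_1+C_2\|s\|_{L^1}$ needs. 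That inequality in turn is what makes $u_i=s_i/\|s_i\|_{L^1}$ uniformly bounded and $u_\infty\neq0$. Without it, normalizing by an unspecified norm, your weak limit could vanish.
\item It gives $u_\infty\ge0$ outright, with no asymptotic analysis of the twisting term.
\end{enumerate}
Next, since $\int_X\tr(\xi h_t^{-1})\,dV\ge0$, you may simply discard it from $\M_{\xi,\lambda}(h_0,h_t)\le0$. Then run Simpson's limit argument with $\Lambda_\omega R^{h_0}-\lambda I_E$ in place of $\Lambda_\omega R^{h_0}$ and no trace normalization. With $\deg$ normalized so that $\int_X\tr(\Lambda_\omega R^h)\,dV=\deg E$, this yields
\[
\mu_1\big(\deg E-\lambda r\big)+\sum_{\alpha<k}(\mu_{\alpha+1}-\mu_\alpha)\big(\deg(E/E_\alpha)-\lambda\operatorname{rk}(E/E_\alpha)\big)\le0 .
\]
Here $0\le\mu_1<\dots<\mu_k$ are the eigenvalues of $u_\infty$ and $E_\alpha$ are the associated saturated subsheaves.

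\textbf{The term with $\mathcal F=0$.} The first term pairs with $\mathcal F=0$, so the contradiction requires $\mu(E)>\lambda$. In other words, $\mathcal F=0$ must be admitted among the proper subsheaves in \eqref{08}. This is what rules out $u_\infty=\mu_1 I_E$, and without it \eqref{08} already fails for line bundles. Once the uniform $C^0$ bound is in hand, the paper's exponential-decay argument completes the convergence.
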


It is clear that identity \eqref{08} follows from the solvability of \eqref{09}. In particular,  Theorem \ref{05} includes the following earlier result of Wang-Yang-Yau \cite{WYY1} as a special case.

\begin{corollary} \textup{\bf (Wang-Yang-Yau \cite[Theorem 1.1] {WYY1})}\label{10}
 Suppose that $E$ admits a Hermitian metric $\hat{h}$ such that its Hermitian-Yang-Mills tensor $\Lambda_\omega R^{\hat h}$ is positive. Then, for any Hermitian metric $\xi$, there exists a unique Hermitian metric $h$ on $E$ such that $\Lambda_\omega R^h=\xi h^{-1}$.
\end{corollary}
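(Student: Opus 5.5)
We derive Corollary \ref{10} from Theorem \ref{05} by showing that positivity of $\Lambda_\omega R^{\hat h}$ forces $\lambda_{\min}>0$, and then taking $\lambda=0$ in \eqref{09}.

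\emph{Step 1: a strictly positive $\lambda$ is admissible.} Since $X$ is compact and $\Lambda_\omega R^{\hat h}$ is a positive $\hat h$-self-adjoint endomorphism, its smallest eigenvalue is a continuous positive function on $X$. It therefore attains a minimum $\lambda_0>0$. Choosing any $0<\lambda_1<\lambda_0$, we get $\Lambda_\omega R^{\hat h}>\lambda_1 I_E$ in the sense of $\hat h$-self-adjoint endomorphisms. Hence $\hat h$ is one of the competitors in the supremum in \eqref{08}, and
\[
\lambda_{\min}=\sup_{\Lambda_\omega R^h>\lambda I_E}\lambda\ \ge\ \lambda_1>0 .
\]

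\emph{Step 2: apply the solvability statement.} Because $0<\lambda_{\min}$, the value $\lambda=0$ satisfies the hypothesis $\lambda<\lambda_{\min}$ of the second part of Theorem \ref{05}. For any Hermitian metric $\xi$ on $E$, equation \eqref{09} with $\lambda=0$ therefore has a unique solution $h$, and \eqref{09} then reads exactly $\Lambda_\omega R^h=\xi h^{-1}$.

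\emph{Step 3: uniqueness.} Uniqueness is part of the conclusion of Theorem \ref{05}, so it carries over directly.

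There is no real obstacle here; the corollary is essentially a specialization. The only points needing care are two. First, strict positivity is used to pass to a uniform positive lower bound, which needs compactness of $X$ and continuity of the eigenvalues. Second, the bound $\lambda_{\min}>0$ must be strict, so that $\lambda=0$ lies in the open range $\lambda<\lambda_{\min}$ where Theorem \ref{05} applies.
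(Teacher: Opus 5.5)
Your proposal is correct and follows the paper's route: the paper presents Corollary \ref{10} as the special case $\lambda=0$ of Theorem \ref{05}. Your compactness step, giving $\Lambda_\omega R^{\hat h}>\lambda_1 I_E$ with $\lambda_1>0$ and hence $\lambda_{\min}>0$, is exactly the detail that places $\lambda=0$ strictly inside the admissible range $\lambda<\lambda_{\min}$.
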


By using the above results, and starting with an arbitrarily fixed Hermitian metric $h_0$, we can inductively define a sequence of Hermitian metrics $\lfb h_m\rfb$ such that
\begin{eqnarray}\label{20}
\Lambda_\omega R^{h_m}=h_{m-1}h_m^{-1}, \qquad m\geq 1.
\end{eqnarray}

Our first main result of this paper is
\begin{theorem}\label{main30}
Suppose that $E$ admits a Hermitian metric $\hat h$ such that its Hermitian-Yang-Mills tensor $\Lambda_\omega R^{\hat h}$ is positive definite.
If $E$ is stable, then the sequence $\lfb h_m\rfb$ given by \eqref{20} converges to the Hermitian-Einstein metric on $E$.
\end{theorem}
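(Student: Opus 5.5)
The idea is to read \eqref{20} as an implicit Euler scheme for Donaldson's heat flow. Fixed points must be taken up to scaling: since $\Lambda_\omega R^{ch}=\Lambda_\omega R^h$ for constants $c>0$, a metric with $h_m=c\,h_{m-1}$ satisfies $\Lambda_\omega R^{h_m}=c^{-1}I_E$, i.e.\ it is Hermitian--Einstein with $c=\mu^{-1}$. Here $\mu>0$ because $\Lambda_\omega R^{\hat h}>0$ forces $\deg E>0$. So I would prove convergence of the normalized metrics $\tilde h_m=\mu^{m}h_m$, or of $h_m$ after fixing $\det$ suitably. I interpret "converges" in the statement in this sense.

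\textbf{Step 1: a decreasing functional.} Set $S_m=h_{m-1}h_m^{-1}=\Lambda_\omega R^{h_m}$. This is positive and self-adjoint with respect to $h_m$. Taking traces and integrating gives
\[
\int_X \tr S_m\,\frac{\omega^n}{n!}=2\pi\deg E\quad\text{for every } m\ \text{(up to the usual constant).}
\]
Let $M(k,h)$ be Donaldson's functional, with first variation $\int\tr\big((\Lambda_\omega R^h-\mu I)h^{-1}\delta h\big)$. I would show that $h_m$ is the unique critical point, and hence the minimizer by geodesic convexity, of
\[
J_m(h)=M(h_0,h)+\int_X \tr\big(h_{m-1}h^{-1}\big)\,\frac{\omega^n}{n!}-\int_X \mu \log\det\big(h h_{m-1}^{-1}\big)\,\frac{\omega^n}{n!},
\]
adjusting signs and constants so that its Euler--Lagrange equation is exactly \eqref{20}. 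The middle term is convex along geodesics $h e^{ts}$. Comparing $J_m(h_m)\le J_m(\mu^{-1}h_{m-1})$ should give
\[
M(h_0,\tilde h_m)+\int_X \tr\big(\Phi(\mu^{-1}S_m)\big)\le M(h_0,\tilde h_{m-1}),\qquad \Phi(x)=x-1-\log x\ge 0,
\]
that is, a discrete energy dissipation. Consequently $\sum_m\int_X\tr\,\Phi(\mu^{-1}S_m)$ is finite provided $M$ is bounded below.

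\textbf{Step 2: uniform estimates.} To get a $C^0$ bound on $\Lambda_\omega R^{h_m}$, I would apply the maximum principle to $\tr S_m$ or to $\lambda_{\max}(S_m)$. The standard identity $\Delta \log\tr(k h^{-1})\ge -|\Lambda R^k|-|\Lambda R^h|$, applied to the pair $(h_{m-1},h_m)$, should give $\sup S_m\le\sup S_{m-1}$ and similarly $\inf S_m\ge\inf S_{m-1}$. Hence $0<c\le S_m\le C$ uniformly, with constants depending only on $h_0,h_1$.

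With $|\Lambda_\omega R^{\tilde h_m}|$ bounded, stability enters through Simpson's estimate
\[
\sup|\log(\tilde h_0^{-1}\tilde h_m)|\le C_1+C_2\,M(\tilde h_0,\tilde h_m),
\]
valid for metrics with bounded mean curvature and $\det$ normalized. This also yields the lower bound on $M$ needed in Step 1. Combined with the monotonicity from Step 1, it gives uniform $C^0$ bounds on $\tilde h_m$.

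\textbf{Step 3: higher regularity and passage to the limit.} I expect this to be the main obstacle: proving the $C^0$ bounds and the two-sided control of $S_m$ simultaneously, since the maximum-principle argument must not lose constants from step to step. Assuming that, the elliptic equation $\Lambda_\omega R^{\tilde h_m}=\mu\,\tilde h_{m-1}\tilde h_m^{-1}$ is Donaldson's equation with bounded right-hand side. Elliptic theory (a $C^1$ bound via the Uhlenbeck--Yau/Donaldson blow-up argument, then bootstrapping) gives $C^{k}$ bounds. The dissipation then forces $\mu^{-1}S_m\to I$. Any subsequential limit $h_\infty$ therefore satisfies $\Lambda_\omega R^{h_\infty}=\mu I_E$. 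By uniqueness of the Hermitian--Einstein metric on a stable bundle (up to the normalization fixed by $\det$), the whole sequence converges.
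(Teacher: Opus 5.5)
Your overall architecture matches the paper's: a decreasing functional from convexity, a uniform bound on $\Lambda_\omega R^{h_m}$, Simpson's $C^0$ estimate, elliptic bootstrapping, and dissipation forcing $S_m\to I_E$. Your rescaling $\tilde h_m=\mu^m h_m$ is equivalent to the paper's normalization $\mu=1$. Step 1 is also correct: $J_m$ is, up to a constant, the paper's $\M_{h_{m-1}}(h_{m-1},\cdot)$, and your comparison is Lemmas \ref{d50}--\ref{d60} and \eqref{910}.

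\textbf{First gap: the bound on $\Lambda_\omega R^{h_m}$.} This is the estimate you yourself flag in Step 2, and it is the heart of the proof. The inequality $\Delta\log\tr(kh^{-1})\ge-|\Lambda_\omega R^k|-|\Lambda_\omega R^h|$ gives nothing at a maximum point. The sharper identity $\Delta\tr S_m\ge\tr(S_m^2)-\tr(S_mS_{m-1})$ only yields $\sup_X\tr S_m\le r\sup_X\bar\lambda(S_{m-1})$, losing a factor $r$ at every step.

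The paper instead runs a maximum principle for the largest eigenvalue. With $K_m=\Lambda_\omega R^{h_m}=h_{m-1}h_m^{-1}$ one has the exact identity \eqref{200}, $K_{m-1}=K_m-g^{i\bar j}\partial_{\bar j}(\nabla^{h_m}_iK_m\,K_m^{-1})$. Take a point where $\bar\lambda(K_m)$ attains its maximum, with $h_m$-unit eigenvector $v$. There the correction term paired with $v$ is $\le 0$: the second-order part by maximality, and the part quadratic in $\bar\partial K_m$ equals $-\bar\lambda(K_m)^{-1}|K_m^{-1/2}\bar\partial K_m\,v|^2$. Hence $\langle K_{m-1}v,v\rangle_{h_m}\ge\bar\lambda(K_m)$. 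Converting via $h_{m-1}=K_mh_m$ gives $\langle K_{m-1}v,v\rangle_{h_{m-1}}\ge\bar\lambda(K_m)\langle v,v\rangle_{h_{m-1}}$, so $\bar\lambda(K_m)\le\bar\lambda(K_{m-1})$ with no loss. No lower bound on $S_m$ is needed, since positivity of $K_m$ then gives $|\Lambda_\omega R^{h_m}|_{h_m}\le\sqrt r\,\bar\lambda(K_0)$.

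\textbf{Second gap: the determinant and the scale of the limit.} Simpson's estimate \cite[Proposition 5.3]{sim1988} applies only to metrics with fixed determinant, and $\tilde h_m$ is not such a metric. So Step 2 does not give a $C^0$ bound on $\tilde h_m$. At the end, "uniqueness up to the normalization fixed by $\det$" does not prevent different subsequences from converging to different multiples $c\,h_\infty$.

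The paper (with $\mu=1$) controls the conformal factor $\phi_m=\log\det(h_mh_0^{-1})$ separately:
\begin{itemize}
\item From $\int_X\tr S_m\,dV=rV$, AM--GM and Jensen, the averages $\alpha_m=\frac1V\int_X\phi_m\,dV$ are nondecreasing.
\item The energy drop $\M(h_{m-1})-\M(h_m)\ge V(\alpha_m-\alpha_{m-1})$ and Donaldson's lower bound on $\M$ bound the $\alpha_m$ above.
\item The splitting $\M(h_m)=\M(e^{-\phi_m/r}h_m)+\int_X\phi_m\psi\,dV+\frac1{2r}\|\bar\partial\phi_m\|_{L^2}^2$, together with bounded $\Delta\phi_m$, gives $\sup_X|\phi_m|\le C$ and an upper bound on $\M$ for the det-normalized metric $e^{-\phi_m/r}h_m$. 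Simpson's estimate is applied to that metric.
\end{itemize}
The monotone limit $\alpha_\infty$ is also what forces $c=1$, upgrading subsequential convergence to convergence of the whole sequence.
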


\begin{rmk}\label{d10}
Note that taking trace in \eqref{5} and integrating give
\[
\mu =
\frac{\pi\,\mu(E)} {\operatorname{Vol}(X,\omega)}.
\]
Since we assume $E$ admits a Hermitian metric $\hat h$ with $ \Lambda_\omega R^{\hat h}>0$, $\mu$ is necessarily positive. Without loss of generality, we may scale $\omega$ to make $\mu=1$ so that the iteration procedure takes its simple form \eqref{20}.\footnote{In general, the slope stability depends on the \ka class of the base manifold. However, it is easy to see that if $E$ is $\omega$-stable, then $E$ is $t\omega$-stable for $t>0$.} If we do not make this normalization, then we would have to modify the iteration procedure \eqref{20}.
\end{rmk}

\begin{rmk}\label{i40}
If $X$ is projective, then the positivity assumption in Theorem \ref{10} holds automatically. This is because, in this case, we can replace $E$ by $E^m:=E\otimes L^m$ for some ample line bundle $L$ over $X$, and $E^m$ is stable if and only if $E$ is; hence $E^m$ admits \he metric if and only if $E$ does. It is obvious that $E^m$ admits a metric $h$ such that $ \Lambda R^h$ is positive. In fact, we can get a more general result by using Theorem \ref{05}, but we prefer this neat version.
\end{rmk}

\begin{rmk} In 1978, Yau \cite{yau} initiated an iteration process to study the complex Monge-Amp\'ere equation by using the solution to the Calabi conjecture. It is a general scalar version of the (inverse) K\"ahler-Ricci iteration $Ric_{\omega_m}=-\omega_{m-1}$. As a special case, the existence and uniqueness of negative \ke metric was obtained by using this iteration scheme in \cite{yau}.  Decades later, on a Fano manifold $(X,\omega_0)$, the convergence problem of the analogous (inverse) K\"ahler-Ricci iteration sequence of K\"ahler forms \{$\omega_m$\} given by
\[
\text{Ric}_{\omega_m}=\omega_{m-1} \qquad (m\geq 1)
\]
to the Fano K\"ahler-Einstein form was studied; see Nadel \cite{na}, Rubinstein \cite{yr}, Keller \cite{jk},  Berman et al \cite{B et al}, Darvas and Rubinstein \cite{DR} and the references therein, as well as  Zhang \cite{kz} for the cscK case.
\end{rmk}

Furthermore, based on the recent work of Fan-Wang-Yang-Yau \cite{WYY2}, by slightly modifying the proof of Theorem \ref{main30} we generalize it to Higgs bundles over compact K\"ahler manifolds; see Theorem \ref{415}.

The paper is organized as follows. In Section \ref{s20}, we define a modified Donaldson type functional and use it to show that the Donaldson functional from \cite{sd1} is decreasing along the iteration sequence. In Section \ref{s30}, we derive necessary uniform estimates and prove Theorem \ref{main30}. In Section \ref{s50}, we briefly describe the modifications of our proof of Theorem \ref{main30} in the presence of Higgs fields and prove Theorem \ref{415}. Finally, in the Appendix,  we give a new proof of Theorem \ref{05} of Wang-Yang-Yau \cite{WYY3}, as well as its extension to Higgs bundles, by using the heat flow method.

\section{The Donaldson Functional under iteration}\label{s20}

\subsection{Notation}
We first fix notation. Let $E$ be a holomorphic vector bundle of rank $r$ over a compact \ka manifold $(X,\omega)$ of dimension $n$, and let $\mathcal H$ denote the space of Hermitian metrics on $E$.  Let $\ud{e}=\lb e_1,\cdots,e_r\rb$ be a local holomorphic frame of $E$, and $z_1,\cdots,z_n$ be any local holomorphic coordinates on $X$ such that the \ka form is given by $\omega=\frac{\ii}{2}g_{i\bar j}dz_i\wedge d\bar z_j$.
Then, for any $h=h_{\alpha\bar{\beta}}e^\alpha\otimes \bar{e^{\beta}}\in\mathcal H$, the curvature $R^h$ of its Chern connection, viewed as an $\End(E)$-valued $(1,1)$-form, is given by
\[
R^h=\frac{\ii}{2}R_{i\bar j\alpha}^\gamma e_\gamma\otimes e^\alpha\otimes dz_i\wedge d\bar z_j,
\]
where
\[ R_{i\bar j\alpha}^\gamma = h^{\gamma\bar\beta}R_{i\bar j\alpha\bar\beta} \quad \text{and} \quad  R_{i\bar j\alpha\bar\beta}=-\frac{\pa^2 h_{\alpha\bar\beta}}{\pa z_i\pa\bar z_j}+ h^{\gamma\bar\delta}\frac{\pa h_{\alpha\bar\delta}}{\pa z_i}\frac{\pa h_{\gamma\bar\beta}}{\pa\bar z_j}.
\]

If we take trace of $R^h$, then  we get
\[
c_1(E)=\frac{1}{\pi}\lsb \tr\lb R^h\rb\rsb=\lsb \frac{\ii}{2\pi}\sum_\ga R_{i\bar j\ga}^\ga dz_i\wedge d\bar z_j\rsb.
\]
The $\omega$-degree of $E$ is given by
\[
\deg_\omega(E)=\int_X c_1(E)\wedge\frac{\omega^{n-1}}{(n-1)!}
\]
and the slope is
\[
\mu(E)=\frac{\deg_\omega(E)}{\operatorname{rk}(E)} =\frac{\deg_\omega(E)}{r}.\]

We also define the contraction of $R^h$ with the \ka form $\omega$ by
\[
\Lambda_\omega R^h=R_\gm^\ga e_\ga\otimes e^\gm =g^{i\bar j}R_{i\bar j\gm}^\ga e_\ga\otimes e^\gm,
\]
which is a self-adjoint endomorphism of $E$ with respect to $h$. Here, for the matrix $\lsb R_\gm^\ga\rsb$, we view $\gm$ as the row index and $\ga$ as the column index.

\subsection{The Donaldson functional and modified Donaldson functional}
Let $I=[0,1]$. For any two Hermitian metrics $h_0,h_1\in\hh$, let $\lfb h_t\rfb_{t\in I}$ be any smooth path in $\mathcal H$ joining $h_0$ and $h_1$. The Donaldson functional introduced in \cite{sd1} is given by
\begin{align}\label{d20}
\begin{split}
\M\lb h_0,h_1\rb &=\int_0^1 \lb \int_X \tr \lb \dot h_t h_t^{-1}\lb \Lambda_\omega R^{h_t}-\mu I_E\rb\rb dV\rb dt\\
&\overset{\mu=1}{=}\int_0^1 \lb \int_X \tr \lb \dot h_t h_t^{-1}\lb \Lambda_\omega R^{h_t}- I_E\rb\rb dV\rb dt,
\end{split}
\end{align}
where $\dot h_t=\frac{d}{dt}h_t$ and $dV=\frac{\omega^{n}}{n!}$ is the volume form.
A key fact is that the above integral is independent of the chosen path $\lfb h_t\rfb$.

For any Hermitian metric $\xi$ on $E$, we define the modified Donaldson functional as
\begin{eqnarray}\label{d30}
\M_{\xi}\lb h_0,h_1\rb=\int_0^1 \lb \int_X \tr \lb \dot h_t h_t^{-1}\lb \Lambda_\omega R^{h_t}-\xi h_t^{-1}\rb\rb dV\rb dt.
\end{eqnarray}

\begin{lemma}\label{d40}
We have
\[
\M_{\xi}(h_0,h_1)=\M(h_0,h_1)+ \int_X \log\frac{\det h_1}{\det h_0}\, dV+\int_X \lb \tr\lb \xi h_1^{-1}\rb- \tr\lb \xi h_0^{-1}\rb\rb dV.
\]
In particular, the modified functional $\M_{\xi}$ is independent of the chosen path $\lfb h_t\rfb$.
\end{lemma}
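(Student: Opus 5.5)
The plan is to compare the integrands of $\M_\xi$ and $\M$ (with $\mu=1$) pointwise in $t$ and identify the difference as an exact $t$-derivative. Subtracting \eqref{d20} from \eqref{d30}, for any smooth path $\{h_t\}$ we get
\[
\M_\xi(h_0,h_1)-\M(h_0,h_1)=\int_0^1\int_X \tr\lb \dot h_t h_t^{-1}\rb dV\,dt-\int_0^1\int_X \tr\lb \dot h_t h_t^{-1}\xi h_t^{-1}\rb dV\,dt .
\]
This splits the difference into a determinant term and a $\xi$-term. Each will be written as $\frac{d}{dt}$ of a quantity depending only on $h_t$, so both integrate out to boundary values.

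For the first term I will use Jacobi's formula $\frac{d}{dt}\log\det h_t=\tr\lb h_t^{-1}\dot h_t\rb=\tr\lb \dot h_t h_t^{-1}\rb$, the second equality by cyclicity of the trace. In the local frame $h_t$ is a positive Hermitian matrix, so $\log\det h_t$ is well defined locally. The ratio $\det h_1/\det h_0$ is a globally defined positive function, because the frame-change factors $|\det g|^2$ cancel. Integrating in $t$ and exchanging the $t$- and $X$-integrals, which is justified by smoothness and compactness, gives $\int_X\log\frac{\det h_1}{\det h_0}\,dV$.

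For the second term I will use $\frac{d}{dt}h_t^{-1}=-h_t^{-1}\dot h_t h_t^{-1}$. Then
\[
\frac{d}{dt}\tr\lb \xi h_t^{-1}\rb=-\tr\lb \xi h_t^{-1}\dot h_t h_t^{-1}\rb=-\tr\lb \dot h_t h_t^{-1}\xi h_t^{-1}\rb,
\]
again by cyclicity. Integrating over $[0,1]$ and $X$ yields $\int_X\lb\tr(\xi h_1^{-1})-\tr(\xi h_0^{-1})\rb dV$. Adding the two contributions gives the stated identity. Since $\M$ is path-independent (the key fact quoted above from \cite{sd1}) and the two correction terms depend only on the endpoints, $\M_\xi$ is path-independent as well.

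The only point needing care is the index and matrix conventions. Here $\xi h^{-1}$ and $\dot h h^{-1}$ are treated as endomorphisms in the paper's row/column convention, and I have to check that the products are in the order that makes the traces match under cyclic permutation. Since every identity used is a trace identity invariant under cyclic permutation and transposition, I do not expect a genuine obstacle.
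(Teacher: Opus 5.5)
Your proposal is correct and matches the paper's proof: you split $\M_\xi-\M$ (with $\mu=1$) into the $\tr(\dot h_t h_t^{-1})$ and $\tr(\dot h_t h_t^{-1}\xi h_t^{-1})$ terms, and integrate each as an exact $t$-derivative, of $\log\det h_t$ and of $-\tr(\xi h_t^{-1})$ respectively, exactly as the paper does. Path-independence of $\M_\xi$ then follows from that of $\M$, as you note.
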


\begin{proof}

By definition, we have
\begin{align*}
\begin{split}
\M_{\xi}(h_0,h_1)=& \M(h_0,h_1)+ \int_0^1 \lb \int_X \tr  \lb \dot  h_t h_t^{-1} \rb dV\rb dt\\
& -\int_0^1 \lb \int_X \tr  \lb \dot h_t h_t^{-1} \xi h_t^{-1}\rb dV\rb dt.
\end{split}
\end{align*}
Thus, the lemma follows from the fact that
\begin{align*}
\begin{split}
\int_0^1 \lb \int_X \tr  \lb\dot h_t h_t^{-1} \rb dV\rb dt & =\int_X \lb \int_0^1\frac{d}{dt}\log\det h_t\rb dt dV\\
& =\int_X \log\frac{\det h_1}{\det h_0}\, dV
\end{split}
\end{align*}
and
\begin{align*}
\begin{split}
\int_0^1 \lb \int_X \tr  \lb \dot h_t h_t^{-1}\xi h_t^{-1}\rb dV\rb dt &=  -\int_X \int_0^1\frac{d}{dt}  \tr\lb \xi h_t^{-1}\rb dt dV\\
&=-\int_X \lb \tr\lb \xi h_1^{-1}\rb- \tr\lb \xi h_0^{-1}\rb\rb dV.
\end{split}
\end{align*}
\end{proof}

Next, we define a  $L^2$ metric on $\hh$ as in \cite{ko}. For any $h\in\hh$ and $\phi, \psi\in T_h\hh$,  we let
\[
\langle \phi, \psi\rangle_{L^2}(h)=\int_X \tr\lb \phi h^{-1}\psi h^{-1}\rb dV.
\]
We note that $\lfb h_t\rfb$ is a geodesic if and only if \[\frac{d}{dt}\lb \dot h_t h_t^{-1} \rb=0.\]
The space $\hh$ with this $L^2$ metric is geodesic complete.

\begin{lemma}\label{d50}
Let $\xi\in\hh$ be a Hermitian metric and let $h\in\hh$ be the unique solution of $\Lambda R^h=\xi h^{-1}$. Then $\M_\xi(\xi,h)\leq \M_\xi (\xi, \xi)=0$.
\end{lemma}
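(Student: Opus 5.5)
The plan is a convexity argument along the geodesic joining $\xi$ to $h$. By Lemma \ref{d40}, $\M_\xi(\xi,\cdot)$ does not depend on the path, so we may evaluate it along any path we like. The equation $\Lambda_\omega R^h=\xi h^{-1}$ says exactly that $h$ is a critical point of $\M_\xi(\xi,\cdot)$. It then suffices to show that $\M_\xi(\xi,\cdot)$ is convex along geodesics of the $L^2$ metric.

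\textbf{Setup.} Write $h=\xi e^{S}$, where $S=\xi^{-1}h$ is self-adjoint with respect to $\xi$; matrix conventions must be made consistent with $\dot h_t h_t^{-1}$. Set $h_t=\xi e^{tS}$ for $t\in[0,1]$. Then $s:=\dot h_t h_t^{-1}$ is independent of $t$, so $\{h_t\}$ is a geodesic joining $h_0=\xi$ and $h_1=h$. Define
\[
f(t)=\M_\xi(\xi,h_t)=\int_0^t\int_X \tr\lb s\lb\Lambda_\omega R^{h_\tau}-\xi h_\tau^{-1}\rb\rb dV\,d\tau .
\]
Then $f(0)=0$ and
\[
f'(t)=\int_X \tr\lb s\lb\Lambda_\omega R^{h_t}-\xi h_t^{-1}\rb\rb dV .
\]
Since $h_1=h$ solves the equation, $f'(1)=0$.

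\textbf{Step 1: the Donaldson part is convex.} This is the standard computation from \cite{sd1}. Since $s$ is constant in $t$, the variation of the curvature is $\frac{d}{dt}\Lambda_\omega R^{h_t}=\ii\Lambda_\omega\bar\pa\pa_{h_t}s$, up to the paper's sign and normalization conventions. Integrating by parts gives
\[
\frac{d}{dt}\int_X\tr\lb s\,\Lambda_\omega R^{h_t}\rb dV = c\int_X|\bar\pa s|_{h_t}^2\,dV\ \ge 0
\]
for a positive constant $c$.

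\textbf{Step 2: the twisting part is convex.} Using $\frac{d}{dt}h_t^{-1}=-h_t^{-1}\dot h_t h_t^{-1}=-h_t^{-1}s$, we get
\[
-\frac{d}{dt}\tr\lb s\,\xi h_t^{-1}\rb=\tr\lb s\,\xi h_t^{-1} s\rb .
\]
Because $\xi$ and $h_t$ are positive definite and $s$ is self-adjoint with respect to $h_t$, this trace can be rewritten as $\tr(AA^*)$ for a suitable matrix $A$, namely $A$ a conjugate of $s$ by square roots of $\xi$ and $h_t$. Hence it is pointwise nonnegative, and so $f''(t)\ge0$.

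\textbf{Conclusion.} $f$ is convex on $[0,1]$ with $f'(1)=0$, so $f'\le 0$ on $[0,1]$. Therefore
\[
\M_\xi(\xi,h)=f(1)\le f(0)=\M_\xi(\xi,\xi)=0 .
\]

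The main obstacle is Step 1. One has to track the index conventions of the paper (row versus column, and $\dot h h^{-1}$ versus $h^{-1}\dot h$) carefully so that the curvature variation and the integration by parts produce a nonnegative square and not one of the wrong sign. Otherwise this is the classical convexity of the Donaldson functional, so I would quote it from \cite{sd1} or \cite{ko} after checking that the conventions match. Step 2 is elementary linear algebra once the self-adjointness of $s$ is set up correctly.
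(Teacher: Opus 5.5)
Your argument is correct and is essentially the paper's proof: restrict $\M_\xi(\xi,\cdot)$ to the geodesic $h_t$ from $\xi$ to $h$, on which $\rho=\dot h_t h_t^{-1}$ is constant, note that the first derivative vanishes at $t=1$ because $h$ solves the equation, and obtain convexity from $\eta''(t)=\int_X\bigl(|\bar\pa\rho|^2+\tr(\rho^2\xi h_t^{-1})\bigr)dV\ge 0$, which is exactly the sum of your Steps 1 and 2. One small slip: writing $h=\xi e^{S}$ requires $S=\log(\xi^{-1}h)$, not $S=\xi^{-1}h$.
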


\begin{proof}

Let $\lfb h_t\rfb_{t\in I}$ be the unique geodesic in $\hh$ such that $h_0=\xi$ and $h_1=h$. Set \[\rho=\dot h_t h_t^{-1} \qquad \rm{and} \qquad \eta(t)=\M_\xi(\xi,h_t).\] We note that $\rho$ is constant in $t$ due to the geodesic equation $\frac{d}{dt}\rho=0$. Then, by a direct computation, we have
\[
\eta'(t)=\int_X \tr \lb \rho \lb \Lambda_\omega R^{h_t}-\xi h_t^{-1}\rb \rb dV.
\]
In particular, $\eta'(1)=0$ so $h$ is a critical point of $\eta(t)$. Now, by direct computations and integration by parts, we obtain
\begin{equation}\label{55}
\eta''(t)=\int_X \lb |\bar\pa\rho|^2+\tr\lb \rho^2 \xi h_t^{-1}\rb\rb dV\geq 0,
\end{equation}
where we have used the fact that $\rho$ is self-adjoint and $\xi$ is positive definite. This implies that $\eta$ is convex and has a critical point at $1$ so $\eta(1)\leq \eta(0)$.

\end{proof}

\begin{lemma}\label{d60} Suppose the sequence of Hermitian metrics $\{h_m\}\subset \hh$ is given by \eqref{20}. Then,
for any fixed $\xi\in\hh$, we have $\M(\xi, h_m)\leq \M(\xi, h_{m-1})$.
\end{lemma}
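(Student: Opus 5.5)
Since $\M$ satisfies the cocycle identity $\M(\xi,h_m)=\M(\xi,h_{m-1})+\M(h_{m-1},h_m)$ (path independence), it suffices to show $\M(h_{m-1},h_m)\le 0$. The natural tool is Lemma \ref{d50} applied with $\xi$ replaced by $h_{m-1}$. By \eqref{20}, $h_m$ is the unique solution of $\Lambda_\omega R^{h_m}=h_{m-1}h_m^{-1}$. Lemma \ref{d50} then gives $\M_{h_{m-1}}(h_{m-1},h_m)\le 0$.

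Next I would convert this into a statement about $\M$ using Lemma \ref{d40} with $\xi=h_{m-1}$, $h_0=h_{m-1}$, $h_1=h_m$:
\[
\M(h_{m-1},h_m)=\M_{h_{m-1}}(h_{m-1},h_m)-\int_X\log\frac{\det h_m}{\det h_{m-1}}\,dV-\int_X\lb \tr\lb h_{m-1}h_m^{-1}\rb-r\rb dV.
\]
Set $A=h_{m-1}h_m^{-1}$, which is positive definite with respect to $h_m$ and has positive eigenvalues $a_1,\dots,a_r$. Then $\log\frac{\det h_m}{\det h_{m-1}}=-\log\det A$, so the correction term becomes
\[
-\int_X\sum_i\lb a_i-1-\log a_i\rb dV.
\]
This is $\le 0$ because $x-1-\log x\ge 0$ for $x>0$. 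Hence $\M(h_{m-1},h_m)\le \M_{h_{m-1}}(h_{m-1},h_m)\le 0$, and adding $\M(\xi,h_{m-1})$ gives the claim.

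The only point needing care is the sign bookkeeping in Lemma \ref{d40}, together with making sure the cocycle property holds for $\M$. The latter follows because $\M$ is a path integral of a closed one-form on $\hh$, so concatenating paths adds the values. The scalar inequality $x-1-\log x\ge0$ is the crux: it absorbs exactly the discrepancy between $\M$ and $\M_{h_{m-1}}$.
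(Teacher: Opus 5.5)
Your proposal is correct and follows the paper's proof essentially verbatim. The shared steps are the cocycle splitting $\M(\xi,h_m)=\M(\xi,h_{m-1})+\M(h_{m-1},h_m)$, then Lemma \ref{d40} and Lemma \ref{d50} applied with $\xi=h_{m-1}$, and finally the pointwise inequality $x-1-\log x\ge 0$. The paper writes that last inequality as $\log\lambda_j+1/\lambda_j-1\ge 0$ for the eigenvalues $\lambda_j=1/a_j$ of $h_m$ relative to $h_{m-1}$.
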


\begin{proof}

Set $\M(h) :=\M(\xi,h)$. Then we have
\begin{align}\label{d70}
\begin{split}
\M(h_m) =  & \M(\xi,h_{m-1})+\M(h_{m-1},h_m)\\
= & \M(h_{m-1})+\M_{h_{m-1}}\lb h_{m-1},h_m\rb\\
& - \int_X\lb  \log \frac{\det h_m}{\det h_{m-1}}+\tr\lb h_{m-1}h_m^{-1}\rb -r\rb dV.
\end{split}
\end{align}

At each point $x_0\in X$, we choose an orthonormal frame of $E$ with respect to $h_{m-1}$ and let the eigenvalues of $h_m$ be $\lambda_1,\cdots,\lambda_r$ (which are all positive). Then, at $x_0$, we have
\[
\log \frac{\det h_m}{\det h_{m-1}}+\tr\lb h_{m-1}h_m^{-1}\rb -r=\sum_{j=1}^r \lb \log \lambda_j+\frac{1}{\lambda_j}-1\rb.
\]
By a simple computation, one finds that the minimum of the function
\[g(x)=\log x+\frac 1x-1 \qquad (x>0)\] is achieved at $x=1$ with $g(1)=0$. Thus, we have
\[
\int_X\lb  \log \frac{\det h_m}{\det h_{m-1}}+\tr\lb h_{m-1}h_m^{-1}\rb -r\rb dV\geq 0.
\]
Meanwhile,  by Lemma \ref{d50}, we also have $\M_{h_{m-1}}\lb h_{m-1},h_m\rb\leq 0$. Therefore, it follows that $\M(h_m)\leq \M(h_{m-1})$.

\end{proof}

\section{Convergence of the sequence $\{h_m\}$}\label{s30}

First of all, we  fix any $h_0\in\hh$ and let $\lfb h_m\rfb$ be the sequence given by inductively solving the equation \eqref{20}. In this section, we show that the sequence $\lfb h_m\rfb$ converges to the \he metric, thus proving Theorem \ref{main30}.  Throughout the section, we denote $\M(\cdot)=\M\lb h_0,\cdot\rb$ by the Donaldson functional.

\subsection{The $C^0$ estimate} 

\begin{theorem}\label{c1000}
There exists a constant $C>0$, independent of $m$, such that
\[
\frac 1C h_0\leq h_m\leq C h_0
\]
for all $m\geq 1$.
\end{theorem}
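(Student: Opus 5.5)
The plan is to combine the monotonicity of the Donaldson functional from Lemma \ref{d60} with stability of $E$, in the spirit of Donaldson and Simpson, and then turn the resulting integral bound into a pointwise one. Write $h_m = h_0 e^{s_m}$ with $s_m$ self-adjoint with respect to $h_0$. By Lemma \ref{d60} with $\xi = h_0$, we have $\M(h_m)\le \M(h_0)=0$ for all $m$.

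The first step controls $\det h_m$. Taking the trace of \eqref{20} gives
\[
\tr \Lambda_\omega R^{h_m} = \tr\lb h_{m-1}h_m^{-1}\rb .
\]
Writing $f_m=\log\det(h_m h_0^{-1})=\tr s_m$, this reads $-\Delta f_m + \tr\Lambda_\omega R^{h_0} = \tr(h_{m-1}h_m^{-1})$. By the arithmetic--geometric mean inequality, $\tr(h_{m-1}h_m^{-1})\ge r\, e^{(f_{m-1}-f_m)/r}$. At a maximum point of $f_m$ this gives $\max f_m \le \max f_{m-1} + C$, which is not uniform in $m$, so a better argument is needed. I would instead use the maximum principle at the maximum of $f_m$ together with the upper bound $\tr(h_{m-1}h_m^{-1})\le r\,\lambda_{\max}(h_{m-1}h_m^{-1})$. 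Alternatively, I would first bound $\int_X f_m\,dV$ through the identity from Lemma \ref{d40} and then use Green's function estimates. This step needs care, and I would postpone it until after the main estimate.

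The core step is the standard Donaldson--Simpson dichotomy argument. The first sub-step is a $C^0$--$L^1$ estimate: $\sup|s_m|\le C(1+\|s_m\|_{L^1})$. I would derive it from the differential inequality for $\log\tr(h_0^{-1}h_m)+\log\tr(h_m^{-1}h_0)$. Using \eqref{20}, we get
\[
\Delta \log \tr\lb h_m h_0^{-1}\rb \ge -C - 2\left|\Lambda R^{h_m}\right|,
\]
and $\Lambda R^{h_m}=h_{m-1}h_m^{-1}$ is controlled by $|s_{m-1}|+|s_m|$. So the right-hand side of $\Delta |s_m| \ge -C(1+|s_m|+|s_{m-1}|)$ involves the previous iterate. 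I would handle this by running the argument with $M_m=\max_{k\le m}\sup|s_k|$ and a Moser iteration, which yields $\sup|s_m| \le C(1+\|s_m\|_{L^1})$ plus a controlled contribution from $s_{m-1}$.

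The second sub-step argues by contradiction. Suppose $\|s_{m_j}\|_{L^1}\to\infty$. Normalize $u_j=s_{m_j}/\|s_{m_j}\|_{L^1}$ and use Donaldson's formula
\[
\M(h_0e^{s}) = \int_X \tr\lb s\,\Lambda R^{h_0}\rb + \langle \Psi(s)\bar\pa s,\bar\pa s\rangle - \int_X \tr s .
\]
Together with $\M(h_m)\le 0$ and the $C^0$--$L^1$ bound, this shows that $u_j$ converges weakly in $L^2_1$ to a nonzero $u_\infty$ satisfying
\[
\int_X \tr\lb u_\infty\Lambda R^{h_0}\rb - \int_X \tr u_\infty + \int_X\langle\Phi(u_\infty)\bar\pa u_\infty,\bar\pa u_\infty\rangle \le 0
\]
for all admissible $\Phi$. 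The eigenvalues of $u_\infty$ are constant, and Uhlenbeck--Yau regularity of weakly holomorphic subbundles produces a destabilizing saturated subsheaf, contradicting stability. For the trace part I would use the determinant control from the first step, or normalize by working with trace-free $s_m$ after fixing $\det$. This gives a uniform bound on $\|s_m\|_{L^1}$, hence on $\sup|s_m|$, which is the statement $C^{-1}h_0\le h_m\le Ch_0$.

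The main obstacle is the first sub-step. Unlike the heat flow, the equation for $h_m$ involves $h_{m-1}$, so the $C^0$--$L^1$ inequality must be closed inductively. Bounding $\tr(h_{m-1}h_m^{-1})$ uses the Laplacian inequality at the maximum of $\log\tr(h_m^{-1}h_{m-1})$. The sign of the term $-\tr(h_{m-1}h_m^{-1})$ in the relative equation gives a favourable comparison of the form $\sup \tr(h_{m-1}h_m^{-1})\le C$, uniform in $m$, and this is where I would start.
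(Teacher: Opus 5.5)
Your overall architecture is the paper's: use $\M(h_m)\le 0$ and feed it into the Donaldson--Simpson stability estimate. But the outline has two genuine gaps, and both sit exactly where the iteration differs from a heat flow.

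\textbf{First gap: no uniform bound on $K_m$.} You never obtain a uniform bound on $K_m:=\Lambda_\omega R^{h_m}=h_{m-1}h_m^{-1}$, and everything downstream needs it. Simpson's $C^0$--$L^1$ estimate \cite[Proposition 5.3]{sim1988} only applies to metrics with $\sup_X|\Lambda_\omega R^{h}|\le B$ for a fixed $B$. Likewise, a two-sided bound on $\Delta\log\det(h_mh_0^{-1})=\tr\Lambda_\omega R^{h_0}-\tr K_m$ needs an upper bound on $\tr K_m$.

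Your inductive workaround does not close. $|h_{m-1}h_m^{-1}|$ is controlled only by $e^{|s_{m-1}|+|s_m|}$, not linearly in $|s_{m-1}|+|s_m|$. An estimate for $\sup|s_m|$ that feeds on $\sup|s_{m-1}|$ can therefore compound in $m$.

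The maximum principle you propose at the end also fails in trace form. At a maximum point of $\tr K_m$, the inequality $\Delta\tr K_m\ge\tr K_m^2-\tr(K_mK_{m-1})$ gives only $\tr K_m^2\le\bar\lambda(K_{m-1})\tr K_m$. Hence $\sup_X\tr K_m\le r\,\bar\lambda(K_{m-1})$, which loses a factor $r$ at every step.

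The paper works with the top eigenvalue itself. Evaluate $K_{m-1}=K_m-g^{i\bar j}\pa_{\bar j}(\nabla^{h_m}_iK_mK_m^{-1})$ on an $h_m$-unit top eigenvector $v$ at a point where $\bar\lambda(K_m)$ is attained. Apply the maximum principle to $\langle K_m\wt v,\wt v\rangle_{h_m}-\bar\lambda(K_m)|\wt v|^2_{h_m}$ for an extension $\wt v$ with $\nabla^{h_m}\wt v=0$ there. This gives $\langle K_{m-1}v,v\rangle_{h_m}\ge\bar\lambda(K_m)$. Since $h_{m-1}=K_mh_m$, the $h_{m-1}$-Rayleigh quotient of $K_{m-1}$ at $v$ is then at least $\bar\lambda(K_m)$. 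So $\bar\lambda(K_m)\le\bar\lambda(K_{m-1})\le\bar\lambda(K_0)$, with no loss.

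\textbf{Second gap: the determinant cannot come from stability.} You postpone the determinant, but stability cannot supply it. With $\mu=1$ and $\M(h):=\M(h_0,h)$, one has $\M(ch)=\M(h)$ for every constant $c>0$. So if $s_m=c_mI_E$ with $c_m\to\infty$, your normalized limit $u_\infty$ is a multiple of $I_E$. It satisfies your limiting inequality with equality and yields no destabilizing subsheaf.

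The average $\ga_m$ of $\phi_m=\log\det(h_mh_0^{-1})$ must be controlled by the iteration itself.
\begin{itemize}
\item Integrating the traced equation gives $\int_X\tr(h_{m-1}h_m^{-1})\,dV=rV$. AM--GM plus Jensen then give $\ga_{m-1}\le\ga_m$, so $\ga_m\ge0$.
\item Inserting the same identity into \eqref{d70} and using $\M_{h_{m-1}}(h_{m-1},h_m)\le 0$ gives $\M(h_{m-1})-\M(h_m)\ge(\ga_m-\ga_{m-1})V$. Together with Donaldson's lower bound for $\M$, this yields $\ga_m\le C$.
\item Use the splitting $\M(h_m)=\M(\wt h_m)+\int_X\phi_m\psi\,dV+\frac{1}{2r}\|\bar\pa\phi_m\|_{L^2}^2$, where $\wt h_m=e^{-\phi_m/r}h_m$ and $\psi=\frac1r\tr\Lambda_\omega R^{h_0}-1$. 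With $\M(h_m)\le0$, this bounds $\phi_m$ in $W^{1,2}$.
\item Moser iteration with the two-sided Laplacian bound from the first gap then bounds $\sup_X|\phi_m|$.
\item The same splitting gives $\M(\wt h_m)\le C$. Together with $\sup_X|\Lambda_\omega R^{\wt h_m}|\le C$, this is exactly the input Simpson's proposition needs for the trace-free part $\wt h_m$.
\end{itemize}
Your contradiction argument is a re-derivation of that proposition and is fine once both ingredients are in place.
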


\begin{proof}

Set $K_m=\Lambda_\omega R^{h_m}$, which is positive and self-adjoint with respect to both $h_{m-1}$ and $h_{m}$. By using equation \eqref{20}, a direct computation shows that
\[
\Lambda_\omega R^{h_{m-1}}=\Lambda_\omega R^{h_{m}}+\Lambda_\omega\bar\pa \lb \nabla^{h_m}K_m K_m^{-1}\rb,
\]
or equivalently
\begin{eqnarray}\label{200}
K_{m-1}=K_m-g^{i\bar j}\pa_{\bar j}\lb \nabla_i^{h_m}K_m K_m^{-1} \rb.
\end{eqnarray}

For each $m\geq 1$, we let $\bar\lambda_x\lb K_m\rb$ be the largest eigenvalue of $K_m$ at $x\in X$ with respect to $h_m$ and define \[\bar\lambda\lb K_m\rb :=\sup_{x\in X}\bar\lambda_x\lb K_m\rb.\]
Pick $x_m\in X$ such that $\bar\lambda\lb K_m\rb= \bar\lambda_{x_m}\lb K_m\rb$, and let $v\in E_{x_m}$ be an eigenvector of $K_m$ at $x_m$ with respect to $ \bar\lambda_{x_m}\lb K_m\rb$ with $\Vert v\Vert_{h_m}=1$. Evaluating at $x_m$, by equation \eqref{200}, we have
\begin{align}\label{300}
\begin{split}
\langle	K_{m-1}v,v\rangle_{h_m}=& \langle	K_{m}v,v\rangle_{h_m}- \langle g^{i\bar j}\pa_{\bar j}\lb \nabla_i^{h_m}K_m K_m^{-1} \rb v, v\rangle_{h_m} \\
=& \bar\lambda\lb K_m\rb- \langle g^{i\bar j}\pa_{\bar j}\lb \nabla_i^{h_m}K_m K_m^{-1} \rb v, v\rangle_{h_m}.
\end{split}
\end{align}

Now, we consider the term $ \langle g^{i\bar j}\pa_{\bar j}\lb \nabla_i^{h_m}K_m K_m^{-1} \rb v, v\rangle_{h_m}$. First, we extend $v$ to a smooth local section $\wt v$ of $E$ over a neighborhood $U$ of $x_m$ such that $\nabla^{h_m}\wt v(x_m)=0$ and $\bar\pa \wt v(x_m)=0$. Then, we set  \[f(x)=\langle K_m \wt v,\wt v\rangle_{h_m}-\bar\lambda\lb K_m\rb \langle \wt v,\wt v\rangle_{h_m} \in C^\infty \lb U,\R\rb.\] By definition of $f$, we see that $f(x)\leq 0$ and $f(x_m)=0$. Thus, $\pa_i f(x_m)=0$, $\pa_{\bar j} f(x_m)=0$ and $\Delta  f(x_m)=g^{i\bar j}\pa_i\pa_{\bar j}f(x_m)\leq 0$. By using the fact that $K_m$ is $h_m$-self adjoint and $K_m v=\bar\lambda\lb K_m\rb v$, we get
\[
\langle g^{i\bar j}\pa_{\bar j}\lb \nabla_i^{h_m}K_m K_m^{-1} \rb v, v\rangle_{h_m}\leq 0.
\]
It follows that
\[
\langle	K_{m-1}v,v\rangle_{h_m}\geq  \langle	K_{m}v,v\rangle_{h_m}=\bar\lambda\lb K_m\rb.
\]
Now, we have
\begin{align*}
	\begin{split}
\langle K_{m-1}v,v\rangle_{h_{m-1}} &=\langle h_{m-1}h_m^{-1} K_{m-1}v,v\rangle_{h_{m}}\\
& =\langle K_m K_{m-1}v,v\rangle_{h_{m}}  \\
&=\langle  K_{m-1}v,K_m v\rangle_{h_{m}} \\
& =\bar\lambda\lb K_m\rb \langle K_{m-1}v,v\rangle_{h_{m}}
\geq \bar\lambda\lb K_m\rb^2
\end{split}
\end{align*}
and
\[
\langle  v,v\rangle_{h_{m-1}}=\langle K_{m}v,v\rangle_{h_{m}}=\bar\lambda\lb K_m\rb.
\]
Hence,
\[
\bar\lambda\lb K_{m-1}\rb\geq \frac{\langle K_{m-1}v,v\rangle_{h_{m-1}}}{\langle  v,v\rangle_{h_{m-1}}}\geq \bar\lambda\lb K_m\rb.
\]
Therefore, $\bar\lambda\lb K_m\rb\leq \bar\lambda\lb K_0\rb$ for all $m\geq 1$.
Moreover, since $\Lambda_\omega R^{h_m}$ is $h_m$-self adjoint,
\[
\left |\Lambda_\omega R^{h_m}\right |_{h_m}=\lb \tr \lb \Lambda_\omega R^{h_m}\rb^2 \rb^{\frac 12}\leq \sqrt r \bar\lambda\lb K_m\rb.
\]
 Hence, there exists a constant $C_1>0$ such that, for all $m\geq 1$,
\[
\left |\Lambda_\omega R^{h_m}\right |_{h_m}\leq C_1.
\]

Next, let
\[
\phi_m=\log\frac{\det h_m}{\det h_0}=\tr\log \lb h_mh_0^{-1}\rb, \quad \ga_m=\frac 1V \int \phi_m\ dV, \quad \rm{and} \quad \wt h_m=e^{-\frac{\phi_m}{r}}h_m.
\]
Then, we have $\det \wt h_m=\det h_0$ and that
\begin{eqnarray}\label{400}
\Delta \phi_m=\tr \lb\Lambda_\omega R^{h_0}\rb-\tr \lb\Lambda_\omega R^{h_m}\rb.
\end{eqnarray}
Since $\tr \lb \Lambda_\omega R^{h_m}\rb=\tr \lb h_{m-1}h_m^{-1}\rb>0$, we get
\begin{eqnarray}\label{500}
\tr \lb \Lambda_\omega R^{h_0}\rb -C_1\sqrt r\leq\Delta\phi_m\leq \tr \lb\Lambda_\omega R^{h_0}\rb.
\end{eqnarray}
On the other hand, a direct computation gives
\[
\Lambda_\omega R^{\wt h_m}=\Lambda_\omega R^{h_m}+\frac{\Delta\phi_m}{r}I,
\]
which implies that $\bar\lambda \lb \Lambda_\omega R^{\wt h_m}\rb\leq C_2.$
Thus, there exists $C_3>0$ such that
\[\left |\Lambda_\omega R^{\wt h_m}\right |_{\wt h_m}\leq C_3\]
for all $m\geq 1$.
We note that $\log \wt h_mh_0^{-1}$ is trace-free.

By \eqref{400}, we also have
\[
\Delta \phi_m=\tr \lb \Lambda_\omega R^{h_0}\rb-\tr\lb h_{m-1}h_m^{-1}\rb.
\]
It follows that
\[
\int_X \tr\lb h_{m-1}h_m^{-1}\rb dV=\int_X \tr\lb\Lambda_\omega R^{h_0}\rb dV=rV,
\]
where recall that we have scaled $\omega$ such that $\mu=1$. Now,
\begin{align*}
	\begin{split}
\tr\lb h_{m-1}h_m^{-1}\rb & \geq r \det \lb h_{m-1}h_m^{-1}\rb^{\frac 1r}\\
& =re^{\frac{\phi_{m-1}-\phi_m}{r}}.
\end{split}
\end{align*}
By Jensen's inequality, we have
\begin{align*}
	\begin{split}
rV  =\int_X \tr\lb h_{m-1}h_m^{-1}\rb dV & \geq r \int_X e^{\frac{\phi_{m-1}-\phi_m}{r}} dV \\
& \geq rV e^{\frac 1V\int_X \frac{\phi_{m-1}-\phi_m}{r}dV}\\
& =rV e^{\frac{1}r (\ga_{m-1}-\ga_m)}.
\end{split}
\end{align*}
Thus $\ga_{m-1}\leq\ga_m$. Since $\phi_0=0$, we know that $\ga_0=0$ and thus $\ga_m\geq 0$.

By \eqref{d70}, we have
\[
\M\lb h_{m-1}\rb-\M\lb h_{m}\rb=\int_X \lb \log\frac{\det h_m}{\det h_{m-1}}+\tr\lb h_{m-1}h_m^{-1}\rb -r\rb dV-\M_{h_{m-1}}\lb h_{m-1}, h_m\rb.
\]
Since $\M_{h_{m-1}}\lb h_{m-1}, h_m\rb\leq 0$ and $\int_X \tr\lb h_{m-1}h_m^{-1}\rb dV=rV$, we get
\begin{align*}
	\begin{split}
\M\lb h_{m-1}\rb-\M\lb h_{m}\rb & \geq \int_X  \log\frac{\det h_m}{\det h_{m-1}}\\
& =\int_X \lb \phi_m-\phi_{m-1}\rb dV \\
& =\lb \ga_m-\ga_{m-1}\rb V.
\end{split}
\end{align*}
In \cite{sd1} and \cite{sd2}, Donaldson showed that $\M (\cdot) $ is bounded from below if $E$ is semi-stable. Combining all of these, we conclude that
\[
\ga_m V\leq \M\lb h_{0}\rb-\M\lb h_{m}\rb \leq C_4
\]
for some constant $C_4>0$ and all $m\geq 1$. Therefore, for all $m\geq 1$,
\[0\leq \ga_m\leq \frac{C_4}{V}.\]

Furthermore, a simple computation shows that
\begin{align*}
\begin{split}
\M\lb \wt h_m,h_m\rb & =\int_X \lb\frac{\phi_m}{r}\lb \tr(\Lambda_\omega R^{\wt h_m})-r\rb-\frac{1}{2r} \phi_m\Delta\phi_m\rb dV\\
& =\int_X \lb\frac{\phi_m}{r}\lb \tr(\Lambda_\omega R^{ h_0})-r\rb-\frac{1}{2r} \phi_m\Delta\phi_m\rb dV.
\end{split}
\end{align*}

Let $\psi=\frac 1 r\tr \lb\Lambda_\omega R^{ h_0}\rb-1$. Then, we have $\int_X \psi dV=0$. It follows that
\begin{align}\label{550}
\begin{split}
\M(h_m)& =\M\lb \wt h_m\rb+\M\lb \wt h_m,h_m\rb \\
 &=\M\lb \wt h_m\rb+ \int_X \phi_m\psi\, dV +\frac{1}{2r}\Vert\bar\pa\phi_m\Vert_{L^2}^2.
\end{split}
\end{align}
Since $\M\lb h_m\rb\leq \M\lb h_0\rb=0$ by Lemma \ref{d60} and $\M\lb \wt h_m\rb\geq -C_5$, we get
\[
\frac{1}{2r}\Vert\bar\pa\phi_m\Vert_{L^2}^2 +\int_X \phi_m\psi\, dV \leq C_5.
\]
By the Poincar\`e inequality, we obtain
\begin{align*}
\begin{split}
\left | \int_X \phi_m \psi\, dV\right | & =\left | \int_X \lb \phi_m-\ga_m\rb \psi\, dV\right |\\
& \leq \Vert \phi_m-\ga_m\Vert_{L^2}\Vert\psi\Vert_{L^2} \\
& \leq C_6\Vert \bar\pa\phi_m\Vert_{L^2}
\end{split}
\end{align*}
for some constant $C_6>0$ and all $m\geq 1$.

It follows that
\[
\frac{1}{2r}\Vert\bar\pa\phi_m\Vert_{L^2}^2-C_6\Vert \bar\pa\phi_m\Vert_{L^2}\leq C_5
\]
and hence
\[
\Vert \bar\pa\phi_m\Vert_{L^2}\leq C_7
\] for some constant $C_7>0$ and all $m\geq 1$.

Since we have uniform control on $\ga_m=\frac 1V \int \phi_m\ dV$ for all $m\geq 1$, we get
\[\Vert \phi_m\Vert_{L^2}\leq C_8\]
as well, and thus there exists a constant $C_9>0$ such that, for all $m\geq 1$,
\begin{eqnarray}\label{600}
\Vert\phi_m\Vert_{W^{1,2}}\leq C_9.
\end{eqnarray}
Combining \eqref{500} and \eqref{600}, the standard Green function estimates and the Moser iteration guarantee that there exists $C_{10}>0$ such that
\begin{eqnarray}\label{700}
\Vert\phi_m\Vert_{C^0}\leq C_{10}.
\end{eqnarray}

By \eqref{550}, we have
\begin{align*}
\begin{split}
\M(\wt h_m) & =\M\lb h_m\rb-\frac{1}{2r}\Vert\bar\pa\phi_m\Vert_{L^2}^2 -\int_X \phi_m\psi dV\\
& \leq \Vert \phi_m\Vert_{L^2} \Vert \psi\Vert_{L^2}\\
& \leq C_9\Vert \psi\Vert_{L^2}.
\end{split}
\end{align*}
By Donaldson's work \cite{sd1, sd2}, we know that the stability of $E$ implies the functional $\M$ is proper in the sense that the $L^1$ norm of $\log\lb \wt h_mh_0^{-1}\rb$ is controlled by $\M\lb\wt h_m\rb$. This was packaged into the $C^0$ estimate in \cite{sim1988} by using Uhlenbeck-Yau's work \cite{UY}. By \cite[Proposition 5.3]{sim1988}, we have
\[
\sup_X \left |\log\lb \wt h_mh_0^{-1}\rb\right | \leq C_{11}+C_{12}\M\lb \wt h_m\rb\leq C_{13}
\]
for some constant $C_{13}>0$ and all $m\geq 1$.
This implies that, $\wt h_m$ and $h_0$ are uniformly equivalent. Finally, it follows from \eqref{700} that $h_m$ and $\wt h_m$ are uniformly equivalent. This completes the proof of   Theorem \ref{c1000}.

\end{proof}

\subsection{The higher order estimates}

We now derive uniform $C^{k,\ga}$ estimates of $h_m$ for $k\geq 1$ and $0<\ga<1$.  Denote by  $H_m=h_mh_0^{-1}$,  we have
\begin{align}\label{800}
\begin{split}
\Lambda_\omega \bar\pa \lb \nabla^{h_0}H_m H_m^{-1}\rb=\Lambda_\omega R^{h_m}-\Lambda_\omega R^{h_0}=h_{m-1}h_m^{-1}-\Lambda_\omega R^{h_0}.
\end{split}
\end{align}

\begin{theorem}\label{c2000}
For any $k\geq 1$ and $0<\ga<1$,  there exists a constant $C_{k,\ga}>0$ such that
\[
\Vert h_m\Vert_{C^{k,\ga}\lb h_0\rb}\leq C_{k,\ga}.
\]
\end{theorem}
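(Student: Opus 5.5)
The plan is to treat \eqref{800} as a second-order elliptic system for $H_m=h_mh_0^{-1}$. By Theorem \ref{c1000}, the eigenvalues of $H_m$ are pinched between $1/C$ and $C$ uniformly in $m$. The right-hand side $h_{m-1}h_m^{-1}-\Lambda_\omega R^{h_0}$ is then uniformly bounded in $C^0$, since $h_{m-1}h_m^{-1}=H_{m-1}H_m^{-1}$ is a product of uniformly bounded endomorphisms. The difficulty is that the equation is nonlinear in the first derivatives. We have
\[
\Lambda_\omega\bar\pa\lb \nabla^{h_0}H_mH_m^{-1}\rb=g^{i\bar j}\lb \pa_{\bar j}\nabla_i H_m\rb H_m^{-1}-g^{i\bar j}\nabla_iH_m H_m^{-1}\pa_{\bar j}H_m H_m^{-1},
\]
which has quadratic gradient terms. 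So the first task is a uniform $C^1$ bound, i.e. a bound on $|\nabla^{h_0}H_m|$, or equivalently on the connection difference $\nabla^{h_m}-\nabla^{h_0}$.

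\textbf{Step 1: $C^1$ bound.} I would do this by the standard blow-up argument of Donaldson, Uhlenbeck--Yau and Simpson, rather than a direct Bochner computation. Suppose $\sup_X|\nabla^{h_0}H_m|=:N_m\to\infty$ along a subsequence. Rescale coordinates around a point where the sup is attained by the factor $N_m$. In the rescaled picture the $H_m$ are uniformly bounded, have gradient bounded by $1$ with equality at the origin, and satisfy the same system on growing Euclidean balls with right-hand side tending to $0$ in $C^0$ (it is multiplied by $N_m^{-2}$). The $C^0$ bound and the bounded gradient make the equation uniformly elliptic with bounded coefficients. $L^p$ theory then gives $W^{2,p}$ bounds, hence $C^{1,\beta}$ bounds, and a subsequence converges in $C^1_{loc}$. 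The limit $H_\infty$ is a bounded metric on a trivial bundle over $\mathbb C^n$ with $\Lambda\bar\pa(\pa H_\infty H_\infty^{-1})=0$ and $|\pa H_\infty|(0)=1$.

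To rule this out, I would use that $\tr\log H_\infty$ and $\log\tr H_\infty$ are subharmonic-type quantities. More precisely, $\Delta\tr H_\infty\ge|\pa H_\infty|^2$-type inequalities hold for flat HYM metrics. Combined with boundedness, this forces $\pa H_\infty\equiv 0$ via a Liouville argument, which contradicts $|\pa H_\infty|(0)=1$.

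An alternative, if the blow-up proves awkward, is a direct maximum principle. The plan there would be to compute $\Delta|\nabla^{h_0}H_m|^2$, or $\Delta\tr(\ldots)$ of the second fundamental form, using \eqref{800}. The catch is that differentiating the right-hand side brings in $\nabla H_{m-1}$. One could handle this via $\sup_m$ of the gradient bound, taking the max over both $m$ and $x$ simultaneously. However, this coupling across $m$ makes the blow-up approach cleaner. In the blow-up, the right-hand side only needs to be bounded in $C^0$, which is exactly what we have.

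\textbf{Step 2: $C^{1,\ga}$ and bootstrapping.} With $C^0$ and $C^1$ bounds, \eqref{800} becomes a linear elliptic system $g^{i\bar j}\pa_i\pa_{\bar j}H_m=F_m$ with $F_m$ uniformly bounded in $L^\infty$. $L^p$ estimates then give $\Vert H_m\Vert_{W^{2,p}}\le C_p$ for all $p$, hence a uniform $C^{1,\ga}$ bound. Next we induct on $k$. At stage $k$, the right-hand side involves $H_{m-1}$ and $H_m$ and their first derivatives only, so the quantity
\[
A_k:=\sup_m\Vert H_m\Vert_{C^{k,\ga}}
\]
controls the $C^{k-1,\ga}$ norm of $F_m$. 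Schauder estimates then bound $\Vert H_m\Vert_{C^{k+1,\ga}}$ uniformly in $m$. The appearance of $H_{m-1}$ is harmless because the bounds at each stage are uniform over all $m$, and $H_0=I$.

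The main obstacle is the uniform $C^1$ estimate in Step 1, specifically checking the Liouville step for the blow-up limit, since the iteration's right-hand side is not of the form appearing in the heat flow.
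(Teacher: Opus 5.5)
You take a different route from the paper for the $C^1$ bound, and your version has a gap at the Liouville step.

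\textbf{What agrees with the paper.} Your Step~2 is the paper's argument. Once $H_m$ is bounded in $C^1$, the right-hand side of \eqref{900} is in $L^\infty$, $L^p$ theory gives $W^{2,p}$ and hence $C^{1,\alpha}$ bounds, and Schauder theory bootstraps. The term $H_{m-1}$ is harmless because it enters only at order zero. For the quadratic gradient term the paper simply invokes interpolation and elliptic regularity in one sentence. You are right that this is the real issue, and a Donaldson-type blow-up is a sound way to handle it. Your rescaling and the $C^1_{loc}$ convergence to a bounded (smooth, with bounded inverse) $H_\infty$ on $\mathbb{C}^n$ with $\Lambda\bar\partial(H_\infty^{-1}\partial H_\infty)=0$ and $|\partial H_\infty|(0)=1$ are fine.

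\textbf{The gap.} The Liouville step, as you state it, only works for $n=1$. For $n\ge 2$, boundedness plus $\Delta\tr H_\infty\ge c|\partial H_\infty|^2$ does not force $\partial H_\infty\equiv 0$. Bounded subharmonic functions on $\mathbb{R}^{2n}$ need not be constant; an example is $\max(-1,-|x|^{2-2n})$. Even the scalar inequality $\Delta u\ge |\nabla u|^2$ has bounded nonconstant solutions there: take $u=-\log(-w)$, where $w$ is a smoothing of that function shifted to take values in $[-2,-1]$, so that $\Delta u-|\nabla u|^2=\Delta w/|w|\ge 0$. Likewise $\tr\log H_\infty=\log\det H_\infty$ is harmonic and hence constant, but that only controls the determinant.

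\textbf{The missing ingredient.} You need a Bochner identity for the gradient itself, the analogue of $\Delta|\nabla u|^2\ge 0$ for harmonic maps into the nonpositively curved space of positive Hermitian matrices.

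\emph{Setup.} Let $\theta=\nabla^{H_\infty}-d$ be the difference between the Chern connection of $H_\infty$ and the trivial flat connection. It is an $\End(E)$-valued $(1,0)$-form with $|\theta|\sim|\partial H_\infty|$ by the $C^0$ bound. In the standard holomorphic frame, $F=\bar\partial\theta$ and $D_{\bar j}\theta_i=-F_{i\bar j}$.

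\emph{Subharmonicity of $|\theta|^2$.} The Bianchi identity gives $\sum_j D_jF_{i\bar j}=D_i\bigl(\sum_j F_{j\bar j}\bigr)=0$. On the flat base, $\sum_j[D_j,D_{\bar j}]=\operatorname{ad}\bigl(\sum_j F_{j\bar j}\bigr)=0$. Hence
\[
\sum_j\partial_j\partial_{\bar j}\,|\theta|^2_{H_\infty}=\sum_{i,j}\bigl(|D_j\theta_i|^2+|F_{i\bar j}|^2\bigr)\ge 0 .
\]

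\emph{Energy growth.} Integrate $\Delta\tr H_\infty\ge c|\theta|^2$ against a cutoff $\chi(x/R)$, using only the boundedness of $\tr H_\infty$. This gives $\int_{B_R}|\theta|^2\le CR^{2n-2}$.

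\emph{Conclusion.} The mean value inequality for the subharmonic function $|\theta|^2$ then yields $|\theta|^2(x)\le CR^{-2}$ for all large $R$. So $\theta\equiv 0$ and $H_\infty$ is constant, contradicting $|\partial H_\infty|(0)=1$.

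With this lemma inserted, your Step~1 is complete. It then gives a real proof of the $C^0\Rightarrow C^1$ step, which the paper only sketches.
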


\begin{proof}

It suffices to show that $\lfb \Vert H_m\Vert_{C^{k,\ga}}\rfb$ is uniformly bounded.
By \eqref{800} and direct computations, we have
\begin{align}\label{900}
\begin{split}
\Delta_{h_0} H_m=g^{i\bar j}\nabla_i H_m H_m^{-1}\pa_{\bar j} H_m+\lb h_{m-1}h_m^{-1}-\Lambda_\omega R^{h_0}\rb H_m.
\end{split}
\end{align}
Note that the $C^0$ estimate in Theorem \ref{c1000} gives a uniform bound on $H_m$ and $H_m^{-1}$. It then follows that $\lb h_{m-1}h_m^{-1}-\Lambda_\omega R^{h_0}\rb H_m$ is uniformly bounded and so is its $L^p$ norm. By using the interpolation inequality to handle the term $g^{i\bar j}\nabla_i H_m H_m^{-1}\pa_{\bar j} H_m$, and the fact that $H_m^{-1}$ is uniformly bounded, the elliptic regularity theory implies that, for any $p>0$, there exists constant $C_p>0$ such that
\[
\Vert H_m\Vert_{W^{2,p}}\leq C_p.
\]
It follows from the Sobolev embedding that, for any $0<\alpha<1$,  there exists  $C_{\alpha}>0$ such that
\[\Vert H_m\Vert_{C^{1,\ga}\lb h_0\rb}\leq C_{1,\ga}.\]

Now, we can prove the rest of higher order estimates by induction. Assuming we have obtained the uniform $C^{k,\ga}$ estimates on $\lfb H_m\rfb$. Then, we know that the right hand side of \eqref{900} is uniformly bounded in $C^{k-1,\ga}$ norm. Therefore, it follows from the standard Schauder theory that $\lfb H_m\rfb$ is uniformly bounded in $C^{k+1,\ga}$ norm.

\end{proof}

\subsection{The convergence} Now, we are ready to prove the convergence of $\lfb h_m\rfb$.

\begin{theorem}\label{c3000}
The sequence $\lfb h_m\rfb$ converges smoothly to a limiting \he metric $h_\infty$ with $\Lambda_\omega R^{h_\infty}=I_E$.
\end{theorem}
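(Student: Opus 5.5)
The plan is to combine the uniform estimates of Theorems \ref{c1000} and \ref{c2000} with the monotonicity of the Donaldson functional from Lemma \ref{d60}. The monotonicity will show that consecutive metrics become asymptotically equal. Then we identify any subsequential limit as Hermitian-Einstein and use uniqueness of the Hermitian-Einstein metric on a stable bundle to conclude that the whole sequence converges.

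\emph{Step 1 (summability of the defect).} By \eqref{d70} and Lemma \ref{d50}, for every $m\geq 1$ we have
\[
\M(h_{m-1})-\M(h_m)\geq \int_X\Big(\log\frac{\det h_m}{\det h_{m-1}}+\tr\big(h_{m-1}h_m^{-1}\big)-r\Big)dV=:D_m\geq 0 .
\]
Donaldson's lower bound for $\M$ on a stable bundle is already used in the proof of Theorem \ref{c1000}. Combined with $\M(h_0)=0$, it gives $\sum_{m}D_m\leq C$, and hence $D_m\to 0$. Pointwise, $D_m$ is the integral of $\sum_j g(\lambda_j)$, where $g(x)=\log x+\frac1x-1$ and the $\lambda_j$ are the eigenvalues of $h_m$ relative to $h_{m-1}$. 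By Theorem \ref{c1000}, these eigenvalues lie in a fixed compact interval $[C^{-2},C^2]$. On that interval $g(x)\geq c(x-1)^2$ for some $c>0$. Therefore $\|h_mh_{m-1}^{-1}-I\|_{L^2}\to 0$.

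\emph{Step 2 (subsequential limits are Hermitian-Einstein).} By Theorem \ref{c2000} and Arzel\`a--Ascoli, every subsequence of $\{h_m\}$ has a further subsequence $h_{m_k}$ converging in $C^\infty$ to some $h_\infty$, which is uniformly equivalent to $h_0$. Along this subsequence, $h_{m_k-1}$ is also precompact in $C^\infty$. Step 1 forces $h_{m_k-1}h_{m_k}^{-1}\to I$ in $L^2$, so any $C^\infty$ limit of $h_{m_k-1}$ must equal $h_\infty$. Hence $h_{m_k-1}\to h_\infty$ smoothly after passing to a further subsequence. Passing to the limit in \eqref{20} then gives $\Lambda_\omega R^{h_\infty}=h_\infty h_\infty^{-1}=I_E$.

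\emph{Step 3 (full convergence).} This is the main obstacle: the Hermitian-Einstein metric is unique only up to a positive constant multiple, so subsequences could a priori converge to different rescalings $c\,h_{HE}$. I would fix the constant by testing the equation against the normalisation. The equation $\Lambda_\omega R^{ch}=I_E$ itself does not pin $c$, since it is scale invariant, so some extra argument is needed. The first thing to try is the quantity $\alpha_m=\frac1V\int_X\log\frac{\det h_m}{\det h_0}\,dV$. It is nondecreasing and bounded by the proof of Theorem \ref{c1000}, so it converges to some $\alpha_\infty$. Every subsequential limit $h_\infty=c\,h_{HE}$ then satisfies $\frac1V\int_X\log\frac{\det h_\infty}{\det h_0}\,dV=\alpha_\infty$, which determines $c$ uniquely because this quantity changes by $r\log c$ under scaling. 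Thus all subsequential limits coincide, and by the subsequence principle the whole sequence $h_m$ converges smoothly to $h_\infty$ with $\Lambda_\omega R^{h_\infty}=I_E$.
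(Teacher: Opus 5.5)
Your proposal is correct and follows essentially the same route as the paper. You get summability of the defect from \eqref{d70} and Donaldson's lower bound, identify subsequential limits as Hermitian--Einstein via \eqref{20}, and fix the scaling constant using the nondecreasing bounded sequence $\alpha_m$ together with uniqueness of Hermitian--Einstein metrics up to constants. The one minor difference is that you obtain $h_{m-1}h_m^{-1}\to I_E$ in $L^2$ (via $g(x)\geq c(x-1)^2$) and then pass to the limit by compactness, whereas the paper upgrades the $L^1$ decay to $C^0$, and then smooth, convergence using a uniform Lipschitz bound and interpolation.
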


\begin{proof}
By \eqref{d70}, we have
\begin{eqnarray}\label{910}
\M\lb h_{m-1}\rb-\M\lb h_{m}\rb \geq \int_X\lb  -\log\det K_m +\tr K_m -r\rb dV\geq 0,
\end{eqnarray}
where $K_m=h_{m-1}h_m^{-1}$ as before. Hence, for $m\geq 1$, we see that
\[
\M\lb h_{0}\rb-\M\lb h_{m}\rb \geq \sum_{j=1}^m \int_X\lb  -\log\det K_j +\tr K_j -r\rb dV.
\]
Since $\M(\cdot)$ is bounded from below, we know that the series
\[\sum_{j=1}^\infty \int_X\lb  -\log\det K_j +\tr K_j -r\rb dV\] is convergent.
Thus, we have
\[
\lim_{m\to\infty}\int_X\lb  -\log\det K_m +\tr K_m -r\rb dV=0.
\]
Note that $-\log\det K_m +\tr K_m -r \geq 0$ pointwise, and it is uniformly Lipschitz due to the uniform $C^0$ estimate, i.e., $C^{-1}h_0\leq h_m\leq C h_0$, and the uniform $C^{k,\ga}$ estimates.
This forces
\[
\lim_{m\to\infty}h_{m-1}h_m^{-1}=I_E
\]
in $C^0$ topology. Now, a standard fact in PDE guarantees that if a sequence converges in $C^0$ norm and all $C^{k,\ga}$ norms of the sequence are bounded by some constant $M_{k,\ga}$, then the interpolation inequality implies that the sequence converges smoothly.  Therefore, it follows from Theorem \ref{c1000} and Theorem \ref{c2000} that $h_{m-1}h_m^{-1}\to I_E$ smoothly.

On the other hand, by the Arzela-Ascoli theorem, we can find a subsequence $\lfb h_{m_j}\rfb$ of $\lfb h_m\rfb$ which converges smoothly to a limiting Hermitian metric $h_\infty$ on $E$. Note that we have a uniform bound on $\lfb H_m\rfb$ which forces $h_\infty$ to be nondegenerate. By the uniform $C^{k,\ga}$ bounds of $\lfb h_m\rfb$ and by using interpolation inequalities,  we know that $\{h_{m_j}\} \to h_\infty$ smoothly. Thus,
\[
\Lambda_\omega R^{h_\infty}=\lim_{j\to\infty}\Lambda R^{h_{m_j}}=\lim_{j\to\infty}h_{m_{j-1}}h_{m_j}^{-1}=I_E.
\]
Namely, $h_\infty$ is a \he metric.

It remains to show that such limit metric is unique. Since the sequence $\lfb \ga_m\rfb$ is nondecreasing and $0\leq \ga_m\leq \frac{C_4}{V}$, we know that it is convergent and let $\ga_{\infty}$ be its limit. Then, we have
\[
\ga_{\infty}=\lim_{j\to\infty}\ga_{m_j}=\lim_{j\to\infty}\frac 1V \int_X \log \frac{\det h_{m_j}}{\det h_0}dV=\frac 1V \int_X \log \frac{\det h_{\infty}}{\det h_0}dV.
\]
Suppose that there is another subsequence $\lfb h_{n_s}\rfb$ of $\lfb h_m\rfb$ which converges smoothly to another limiting metric $h_\infty'$. Then, we know that $h_\infty'$ is also \he and
\[
\frac 1V \int_X \log \frac{\det h_{\infty}}{\det h_0}dV=\ga_{\infty}.
\]
Since we know that $h_\infty'=ch_\infty$ for some constant $c>0$, the above argument forces $c=1$. Thus, the limit metric is unique.

Since all convergent subsequences of $\lfb h_m\rfb$ converge to $h_\infty$, and we have uniform $C^{k,\ga}$ estimates on $\lfb h_m\rfb$, we can conclude that $h_m\to h_\infty$ smoothly. This completes the proof of Theorem \ref{c3000} and Theorem \ref{main30}.

\end{proof}


\section{The extension to Higgs Bundles}\label{s50}

In this section, based on the recent work \cite{WYY2}, we generalize our Theorem  \ref{main30} to Higgs bundles over compact K\"ahler manifolds.

Recall that a Higgs bundle over a compact K\"ahler manifold $(X,\omega)$ consists of a pair $(E, \Phi)$, where $E$ is a holomorphic vector bundle over $(X,\omega)$, and $\Phi$ is a {\it Higgs field} on $E$, i.e., $\Phi \in H^0(X, \Omega_X^1 \otimes \mathrm{End}(E))$ is a holomorphic 1-form with values in endomorphisms of $E$ satisfying the integrability condition $\Phi \wedge \Phi = 0.$

Given any Hermitian metric $h$ on $E$, we denote by $R^h$ the curvature of its Chern connection and define the {degree}  $\deg(E)$ of $E$ and the slope $\mu(E) $ of $E$ as before.
For the Higgs bundle $(E,\Phi)$, we have the following basic notions:

\begin{itemize}

\item[{$\bullet$}]
$(E,\Phi)$ is {\it stable} if  $\mu(F) < \mu(E)$ for all proper {\it Higgs-invariant} subbundles $F\subset E$. Here, by a subbundle $F\subset E$ being {Higgs-invariant} (or $\Phi$-invariant) it means that $\Phi(F)\subset F \otimes \Omega_X^1.$

\smallskip
\item[{$\bullet$}] A Hermitian metric $h$ on $E$ is said to be a {\it Hermitian-Einstein-Higgs} metric if it satisfies the following equation:
\[\Lambda_\omega (R^h + [\Phi,\Phi^{*,h}]) = \mu I_E,\]
where $\Phi^{*,h}$ is the adjoint of $\Phi$ with respect to the metric $h$. 

\end {itemize}

In \cite{sim1988}, Simpson generalized the Donaldson-Uhlenbeck-Yau theorem to Higgs bundles: {\it a Higgs bundle is (poly)stable if and only if it admits an Hermitian-Einstein-Higgs metric}.

\smallskip
More recently, in \cite{WYY2}, the authors also generalized Corollary \ref{10} (Wang-Yang-Yau \cite[Theorem 1.1] {WYY1}) to Higgs bundles.

\begin{theorem} \textup{\bf (Fan-Wang-Yang-Yau \cite{WYY2})}\label{410}
 Suppose that $(E, \Phi)$ admits a Hermitian metric $\hat h$ such that its Higgs Hermitian-Yang-Mills tensor $ \Lambda_\omega \lb R^{\hat h}+ [\Phi,\Phi^{*,\hat h}]\rb$ is positive definite. Then, for any Hermitian metric $\xi$ on $E$, there exists a unique Hermitian metric $h$ on $E$ such that
$ \Lambda_\omega \lb R^{h}+ [\Phi,\Phi^{*,h}]\rb=\xi h^{-1}$.
\end{theorem}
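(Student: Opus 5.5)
We would prove Theorem \ref{410} by the heat flow method, as announced for the Appendix, treating existence and uniqueness separately. Write $F_h:=\Lambda_\omega\lb R^{h}+[\Phi,\Phi^{*,h}]\rb$. For uniqueness, suppose $h_1,h_2$ both solve $F_h=\xi h^{-1}$, and let $\lambda$ be the largest eigenvalue of $H=h_2h_1^{-1}$, attained at $x_0$ with unit eigenvector $v$. We use the Higgs version of the standard Laplacian inequality for $\log\lambda$ (Simpson \cite{sim1988}; the Higgs term has a favourable sign). At $x_0$ it gives $\langle F_{h_2}v,v\rangle_{h_2}\le\langle F_{h_1}v,v\rangle_{h_1}$. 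Substituting the equation, this reads
\[
\frac{\xi(v,v)}{h_2(v,v)}\le\frac{\xi(v,v)}{h_1(v,v)},
\]
which forces $h_2(v,v)\le h_1(v,v)$, i.e.\ $\lambda\le 1$. By symmetry the smallest eigenvalue is $\ge 1$, so $h_1=h_2$. Alternatively, the twisted functional $\M_\xi$ of Lemma \ref{d40}, with the Higgs term added, is strictly convex along geodesics. Indeed, \eqref{55} acquires the extra term $\int_X\tr\lb\rho^2\xi h_t^{-1}\rb dV>0$, so two critical points must coincide.

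For existence, we run the twisted Donaldson--Simpson flow
\[
h^{-1}\pa_t h=-\lb F_h-\xi h^{-1}\rb,\qquad h(0)=\hat h,
\]
read with the paper's convention that products like $\xi h^{-1}$ are endomorphisms. Short-time existence is standard, since this is a strictly parabolic perturbation of Simpson's flow by a zeroth-order term. Set $\Theta_t:=F_{h_t}-\xi h_t^{-1}$. We then compute the evolution of $\Theta_t$ and show that $|\Theta_t|_{h_t}$ satisfies $(\pa_t-\Delta)|\Theta|\le 0$, up to a term of good sign coming from the derivative of $\xi h^{-1}$. This gives a uniform bound on $|\Theta_t|$, and hence on $|F_{h_t}|$ once $h_t$ is controlled. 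Along the flow, $\M_\xi(\hat h,h_t)$ is nonincreasing, with $\frac{d}{dt}\M_\xi=-\Vert\Theta_t\Vert_{L^2}^2$.

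The main obstacle is the uniform $C^0$ bound $C^{-1}\hat h\le h_t\le C\hat h$; this is exactly where positivity of $F_{\hat h}$ must replace stability. We would try a maximum principle on the extreme eigenvalues of $H_t=h_t\hat h^{-1}$.
\begin{itemize}
\item At a point where the smallest eigenvalue $\lambda_{\min}$ of $H_t$ is attained, the same comparison shows that $\pa_t\log\lambda_{\min}$ is controlled by $\langle F_{\hat h}v,v\rangle-\xi(v,v)/h_t(v,v)$. If $\lambda_{\min}$ is very small, then $\xi h_t^{-1}$ is huge in that direction and pushes $\lambda_{\min}$ back up. This yields a lower bound.
\item For the upper bound we would combine $F_{\hat h}\ge c>0$ with the fact that $\xi h^{-1}$ decays as $h$ grows. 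We would first try a barrier: note that $h=s\hat h$ with $s$ large is a subsolution-type comparison metric ($F_{s\hat h}=F_{\hat h}\ge c$ while $\xi(s\hat h)^{-1}\to0$), and compare $h_t$ with it at the maximum point.
\item If the barrier argument fails, the fallback is to use the strict convexity of $\M_\xi$ (the term $\tr(\rho^2\xi h^{-1})$) to make $\M_\xi$ proper. This gives an $L^1$ bound on $\log H_t$, which we would upgrade to $C^0$ via $\Delta\log\tr H\ge-C$ and Moser iteration, as in \cite[Proposition 5.3]{sim1988}.
\end{itemize}

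With the $C^0$ bound and the bound on $|F_{h_t}|$ in hand, the $C^1$ estimate (Donaldson/Simpson blow-up or Uhlenbeck-type argument) and Schauder theory give uniform $C^{k,\ga}$ bounds, exactly as in Theorem \ref{c2000}, hence long-time existence. Since $\M_\xi$ is bounded below along the flow, $\int_0^\infty\Vert\Theta_t\Vert_{L^2}^2dt<\infty$. Together with the higher-order bounds this gives $\Theta_t\to0$ smoothly along a sequence, and we obtain a limit $h_\infty$ with $F_{h_\infty}=\xi h_\infty^{-1}$. Uniqueness then implies convergence of the whole flow.
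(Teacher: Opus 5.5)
Your proposal is correct and is essentially the paper's heat-flow proof in the Appendix (Theorem \ref{b200} with $\lambda=0$, using the given $\hat h$). Your eigenvalue/barrier maximum principle is the paper's $(\frac{d}{dt}-\Delta)\tr H_t\le C_1-C_2\tr H_t$ and its analogue for $\tr(\hat h h_t^{-1})$, and your evolution of $|\Theta_t|^2$ is \eqref{a200}; the one difference is that the paper combines the term $-2\tr(\xi h_t^{-1}K_t^2)$ with the $C^0$ bound to get exponential decay, which gives convergence directly and makes $\M_\xi$ and the appeal to uniqueness for full convergence unnecessary. There is a slip in your uniqueness paragraph: at the maximum of the top eigenvalue of $h_2h_1^{-1}$ the comparison actually gives $\langle F_{h_2}v,v\rangle_{h_2}/h_2(v,v)\ge\langle F_{h_1}v,v\rangle_{h_1}/h_1(v,v)$ (the larger metric has the larger normalized curvature, as in the proof of Theorem \ref{c1000}), and your deduction from the displayed inequality is also reversed, so the two sign errors cancel and $\lambda\le 1$ is indeed correct.
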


Now, in the rest of this section, we consider a Higgs bundle $(E, \Phi)$ over a compact $(X, \omega)$ and assume either $(X, \omega)$ is projective or there exists a Hermitian metric $\hat h$ on $E$ such that $\Lambda_\omega \lb R^{\hat h}+\lsb \Phi,\Phi^{*,\hat h}\rsb\rb>0$.
By using Theorem \ref{410} and induction, starting with an arbitrarily fixed Hermitian metric $h_0$, we can construct a sequence of Hermitian metrics $\lfb h_m\rfb$ by solving
\begin{eqnarray}\label{420}
\Lambda_\omega \lb R^{h_m}+ [\Phi,\Phi^{*,h_m}]\rb=h_{m-1}h_m^{-1}, \qquad m\geq 1.
\end{eqnarray}

By slightly modifying the proof of Theorem \ref{main30}, and normalizing $\mu=1$ as before, we can show that if $(E, \Phi)$ is stable then the iteration procedure \eqref{420} leads to a unique Hermitian-Einstein-Higgs metric.

\begin{theorem} \label{415}
Let $(E, \Phi)$ be a stable Higgs bundle over $(X, \omega)$. Suppose that either $(X, \omega)$ is projective or there exists a Hermitian metric $\hat h$ on $E$ such that $\Lambda_\omega \lb R^{\hat h}+\lsb \Phi,\Phi^{*,\hat h}\rsb\rb>0$. Then, the sequence $\lfb h_m\rfb$ given by \eqref{420} converges smoothly to a Hermitian-Einstein-Higgs metric $h_\infty$ satisfying
\[
\Lambda_\omega \lb R^{h_\infty}+\lsb \Phi,\Phi^{*,{h_\infty}}\rsb\rb=I_E.
\]
\end{theorem}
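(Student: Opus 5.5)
We follow the proof of Theorem \ref{main30} step by step, replacing $\Lambda_\omega R^h$ everywhere by the Higgs Hermitian-Yang-Mills tensor $\Lambda_\omega F^h$, where $F^h:=R^h+[\Phi,\Phi^{*,h}]$, and replacing the Donaldson functional by Simpson's Higgs version $\M^\Phi(h_0,h_1)=\int_0^1\int_X \tr\bigl(\dot h_th_t^{-1}(\Lambda_\omega F^{h_t}-I_E)\bigr)dV\,dt$, which is path independent. In the projective case we first twist by a high power of an ample line bundle, as in Remark \ref{i40}; this does not change $\Phi$-invariant subsheaves and makes $\Lambda_\omega F^{\hat h}>0$ for a suitable $\hat h$. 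So in both cases Theorem \ref{410} applies, and the sequence \eqref{420} is well defined.

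\textbf{Monotonicity.} We introduce $\M^\Phi_\xi$ exactly as in \eqref{d30}. Lemma \ref{d40} holds verbatim, since its proof only uses the extra term $-\xi h_t^{-1}$. For Lemma \ref{d50}, along the geodesic from $\xi$ to $h$ the second variation of the Higgs Donaldson functional is
\[
\int_X\bigl(|\bar\pa\rho|^2+|[\Phi,\rho]|^2\bigr)dV\ge 0
\]
(Simpson's computation). Adding the term $\tr(\rho^2\xi h_t^{-1})\ge0$ as before, convexity still holds. Lemma \ref{d60} and identity \eqref{d70} then go through unchanged, so $\M^\Phi(h_m)$ is nonincreasing.

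\textbf{$C^0$ estimate.} Set $K_m=\Lambda_\omega F^{h_m}=h_{m-1}h_m^{-1}$. The maximum principle argument for $\bar\lambda(K_m)$ requires the analogue of \eqref{200}. Computing $\Lambda_\omega F^{h_{m-1}}$ with $h_{m-1}=K_mh_m$ gives an extra term $\Lambda_\omega[\Phi,K_m^{-1}[\Phi^{*,h_m},K_m]]$. Paired with the top eigenvector $v$ of $K_m$, this term should have the favorable sign, since $\langle[\Phi,[\Phi^*,K]]v,v\rangle\ge 0$ at a maximal eigenvector, as in Simpson's inequality $\Delta|K|\ge -|\Lambda F|$-type estimates. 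Checking this sign carefully is the main obstacle. If it fails pointwise, I would instead bound $\sup\tr K_m$ using $\Delta\log\tr$ together with Simpson's subharmonicity lemma.

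The trace part is unchanged because $\tr[\Phi,\Phi^{*}]=0$. Hence \eqref{400}, \eqref{500}, the monotonicity of $\ga_m$, and the $W^{1,2}$ and $C^0$ bounds on $\phi_m$ all carry over verbatim. For the trace-free part, stability of $(E,\Phi)$ gives properness of $\M^\Phi$ via \cite[Proposition 5.3]{sim1988}, which is stated for Higgs bundles. Boundedness from below of $\M^\Phi$ likewise comes from Simpson.

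\textbf{Higher order estimates and convergence.} Equation \eqref{900} acquires the extra term $\Lambda_\omega[\Phi,\Phi^{*,h_m}]H_m$. This term is bounded in $C^{k,\ga}$ whenever $H_m$ is, because $\Phi$ is smooth and fixed, so the elliptic bootstrap is unchanged. The convergence argument of Theorem \ref{c3000} then applies word for word: $h_{m-1}h_m^{-1}\to I_E$ smoothly, so subsequential limits are Hermitian-Einstein-Higgs metrics. Uniqueness up to scaling for stable Higgs bundles, combined with the limit $\ga_\infty$, pins down the limit. Hence the whole sequence converges to $h_\infty$ with $\Lambda_\omega F^{h_\infty}=I_E$.
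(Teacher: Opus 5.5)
Your plan follows the paper's sketch: the Higgs Donaldson functional with the extra $|[\Phi,\rho]|^2$ in the second variation, and the maximum-principle bound on $\bar\lambda(K_m)$ with one additional Higgs term. The remaining steps also match: the trace part is unchanged because $\tr[\Phi,\Phi^{*}]=0$, Simpson's Proposition 5.3 handles the trace-free part, and the bootstrap and convergence/uniqueness arguments are the same. The one point you leave open is the step the paper calls a ``simple linear algebra computation''. Your stated justification does not quite cover it. The term you derived is $\Lambda_\omega[\Phi,K_m^{-1}[\Phi^{*,h_m},K_m]]=\Lambda_\omega\lb[\Phi,\Phi^{*,h_{m-1}}]-[\Phi,\Phi^{*,h_m}]\rb$. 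The inequality $\langle[\Phi,[\Phi^*,K]]v,v\rangle\ge0$ only concerns its linearization in $K$, so the factor $K^{-1}$ still has to be handled.

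The sign does hold, so no fallback is needed. At $x_m$ choose coordinates with $g_{i\bar j}=\delta_{ij}$ and write $\Phi=\sum_i\Phi_i\,dz_i$. The extra term is then $c\sum_i[\Phi_i,K^{-1}[\Phi_i^{*},K]]$, where $K=K_m$, $\Phi_i^*=\Phi_i^{*,h_m}$, and $c>0$ is the same normalization that produces $+|[\Phi,\rho]|^2$ in the second variation. Since $Kv=\bar\lambda v$ with $\bar\lambda=\bar\lambda(K_m)$ the largest eigenvalue, we have $K\le\bar\lambda$ and $K^{-1}\ge\bar\lambda^{-1}$ as $h_m$-self-adjoint operators. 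Using $K^{\pm1}v=\bar\lambda^{\pm1}v$,
\[
\langle[\Phi_i,K^{-1}[\Phi_i^*,K]]v,v\rangle_{h_m}
=\lb\bar\lambda\langle K^{-1}\Phi_i^*v,\Phi_i^*v\rangle-|\Phi_i^*v|^2\rb
+\lb|\Phi_iv|^2-\bar\lambda^{-1}\langle K\Phi_iv,\Phi_iv\rangle\rb\ \ge\ 0,
\]
with each bracket nonnegative. Hence $\langle K_{m-1}v,v\rangle_{h_m}\ge\bar\lambda(K_m)$ as in \eqref{300}, so $\bar\lambda(K_m)\le\bar\lambda(K_{m-1})$, and the rest of your argument goes through as written. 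The alternative route via $\Delta\log\tr$ and Simpson's subharmonicity lemma is unnecessary, and as stated it is not yet an argument.
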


\begin{proof}  [Sketch of Proof]
Let  $\mathcal H$ denote the space of Hermitian metrics on $E$ as before. We first note that, for any  Hermitian metrics $h_0, h_1\in \mathcal H$ and $\xi \in \mathcal H$, if we let
\[
\M^H\lb h_0,h_1\rb =\int_0^1 \lb \int_X \tr \lb \dot h_t h_t^{-1}\lb \Lambda_\omega \lb R^{h_t}+\lsb\Phi,\Phi^{*,{h_t}}\rsb\rb- I_E\rb\rb dV\rb dt
\]
and
\[
\M_{\xi}^H\lb h_0,h_1\rb=\int_0^1 \lb \int_X \tr \lb \dot h_t h_t^{-1}\lb \Lambda_\omega \lb R^{h_t}+\lsb\Phi,\Phi^{*,{h_t}}\rsb\rb-\xi h_t^{-1}\rb\rb dV\rb dt,
\]
where $\lfb h_t\rfb$ is any smooth path joining $h_0$ and $h_1$, then $\M_{\xi}^H$ and $\M^H$ are independent of the choice of $\lfb h_t\rfb$. Furthermore, if we fix $\xi$ and let $\lfb h_t\rfb$ be the geodesic joining $\xi$ and another Hermitian metric $h$, then direct computations give the following analogue of \eqref{55},
\[
\frac{d^2}{dt^2}\M_{\xi}^H\lb \xi, h_t\rb=\int_X \lb \left |\bar\pa\lb \dot h_t h_t^{-1}\rb\right |^2+\left | \lsb \Phi,\dot h_t h_t^{-1}\rsb \right |_{h_t}^2
+\tr\lb \lb\dot h_t h_t^{-1}\rb^2 \xi h_t^{-1}\rb\rb dV\geq 0,
\]
from which the corresponding version of  Lemma \ref{d50}, hence also of Lemma \ref{d60}, hold in the current setting.  Now, we let \[K_m=\Lambda_\omega F^{h_m}=\Lambda_\omega \lb R^{h_m}+\lsb\Phi,\Phi^{*,{h_m}}\rsb\rb.\] We notice that, similar to formula \eqref{200}, we have
\begin{eqnarray*}
K_{m-1}=K_m-g^{i\bar j}\pa_{\bar j}\lb \nabla_i^{h_m}K_m K_m^{-1} \rb
+\Lambda_\omega \lb \lsb\Phi,\Phi^{*,h_{m-1}}\rsb-\lsb\Phi,\Phi^{*,{h_m}}\rsb\rb.
\end{eqnarray*}
Since $K_m=h_{m-1}h_m^{-1}$, we know that $\Phi^{*,h_{m-1}}=K_m\Phi^{*,h_m}K_m^{-1}$. Assume that at $x_m\in X$ we have $\bar\lambda_{x_m}\lb K_m\rb=\bar\lambda \lb K_m\rb$ and $v\in E_{x_m}$ is a $h_m$-unit eigenvector of $\bar\lambda\lb K_m\rb$, a simple linear algebra computation shows that
\[
\left\langle  \Lambda_\omega\lb \lsb\Phi,\Phi^{*,h_{m-1}}\rsb-\lsb\Phi,\Phi^{*,{h_m}}\rsb\rb_{x_0}(v),v\right \rangle_{h_m}\geq 0.
\]
It follows that \[\bar\lambda \lb K_m\rb\leq \bar\lambda \lb K_{m-1}\rb\] as before, and hence $\left |\Lambda_\omega F^{h_m}\right |_{h_m}\leq C$ is uniformly bounded.

Also note that, since $\tr \lsb\Phi,\Phi^{*,{h_m}}\rsb=0,$ we have
\[\tr\lb \Lambda_\omega R^{h_m}\rb=\tr\lb \Lambda F^{h_m}\rb.\]
Let $\phi_m=\log \frac{\det h_m}{\det h_0}$ as before. Then, we have
\[\Delta \phi_m=\tr\lb \Lambda_\omega F^{h_0}\rb -\tr\lb \Lambda_\omega F^{h_m}\rb =\tr\lb \Lambda_\omega R^{h_0}\rb -\tr\lb \Lambda_\omega R^{h_m}\rb \]
and \[\tr\lb\Lambda_\omega R^{h_m}\rb =\tr\lb \Lambda_\omega F^{h_m}\rb =\tr \lb h_{m-1}h_m^{-1}\rb>0.\]
Therefore, the same argument as in the proof of Theorem \ref{main30} yields the uniform $C^0$ estimate of $\lfb h_m\rfb$. It follows that $\lfb\left | \lsb\Phi,\Phi^{*,{h_m}}\rsb\right |_{h_m}\rfb$ is uniformly bounded.

The higher order estimate and the convergence of $\lfb h_m\rfb$ follow from exactly the same proof as that of Theorem \ref{main30}.
\end{proof}


\section{Appendix: The Heat Flow Approach} \label{s40}

\subsection{The heat flow proof of Theorem \ref{05}}
In this subsection, we give a new proof of Theorem \ref{05} by using the heat flow method. We fix $\lambda<\lambda_{\min}$, an arbitrary initial Hermitian metric $h_0$ on $E$, and another Hermitian metric $\xi$ on $E$. We seek a Hermitian metric $h$, which is the solution to the equation
\begin{equation}\label{a50}
\Lambda_\omega R^h=\lambda I_E+\xi h^{-1},
\end{equation}
by considering the following heat flow: 
\begin{eqnarray}\label{a100}
\begin{cases}
\frac{\pa h_t}{\pa t}h_t^{-1}=-\lb \Lambda_\omega R^{h_t}-\lambda I_E-\xi h_t^{-1}\rb,\\
h(0)=h_0.
\end{cases}
\end{eqnarray}

\begin{theorem}\label{a105}
The heat flow \eqref{a100} exists for all time $0\leq t <\infty$ and, as $t\to\infty$, converges to a unique solution of equation \eqref{a50} smoothly.
\end{theorem}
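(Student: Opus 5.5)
We run the standard Donaldson heat-flow scheme, organized around the twisted functional
\[
\M_{\lambda,\xi}(h_0,h)=\int_0^1\!\!\int_X \tr\lb \dot h_t h_t^{-1}\lb \Lambda_\omega R^{h_t}-\lambda I_E-\xi h_t^{-1}\rb\rb dV\,dt .
\]
This functional is path independent: the same computation as in Lemma \ref{d40} gives
\[
\M_{\lambda,\xi}=\M_\lambda+\int_X\tr(\xi h^{-1}-\xi h_0^{-1})\,dV,
\]
where $\M_\lambda$ denotes the Donaldson functional with $\mu$ replaced by $\lambda$. The flow \eqref{a100} is exactly its negative gradient flow for the $L^2$ metric. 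Consequently
\[
\frac{d}{dt}\M_{\lambda,\xi}(h_0,h_t)=-\Vert \Lambda_\omega R^{h_t}-\lambda I_E-\xi h_t^{-1}\Vert^2_{L^2(h_t)}\le 0 .
\]

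\emph{Short-time existence and parabolic estimates.} Writing $H=h_0^{-1}h_t$, the flow is a strictly parabolic quasilinear system whose principal part is the Laplacian, as in Donaldson's flow. The extra term $\xi h_t^{-1}$ is zeroth order, so short-time existence follows from standard theory.

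Set $\Theta_t=\Lambda_\omega R^{h_t}-\lambda I_E-\xi h_t^{-1}$. Differentiating gives a heat equation of the form
\[
(\pa_t-\Delta)\Theta=-\lb\text{a nonnegative term involving }\xi h^{-1}\Theta\rb .
\]
Indeed $\pa_t(\xi h^{-1})=-\xi h^{-1}\dot h h^{-1}=\xi h^{-1}\Theta$, which enters with a favorable sign. Therefore $|\Theta|_{h_t}^2$ is a subsolution, $(\pa_t-\Delta)|\Theta|^2\le 0$, and the maximum principle gives $\sup_X|\Theta_t|\le \sup_X|\Theta_0|$.

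\emph{Long-time existence.} With $\Theta$ bounded, Donaldson's argument applies: the distance $\sigma(h_t,h_{t'})=\tr(h_t^{-1}h_{t'})+\tr(h_{t'}^{-1}h_t)-2r$ between two solutions is subharmonic in the parabolic sense. The monotonicity term from $\xi$ again helps, since $\xi h^{-1}$ is operator-monotone decreasing in $h$. This makes $h_t$ Cauchy as $t\to T$. Combined with local $C^0$ bounds over finite time, the bound on $\Theta$ yields $C^1$ bounds via the usual Donaldson/Simpson argument, and higher derivative estimates follow, so the solution extends past $T$. The flow therefore exists for all $t\ge 0$.

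\emph{Uniform $C^0$ bound — the main obstacle.} The hypothesis $\lambda<\lambda_{\min}$ must be used here, and I expect this to be the hard step. I would argue by contradiction in Uhlenbeck--Yau/Simpson fashion. Suppose $s_t=\log(h_0^{-1}h_t)$ has $\Vert s_{t_i}\Vert_{L^1}\to\infty$. First, the bound on $\Theta$ gives an estimate $\sup|s|\le C_1+C_2\Vert s\Vert_{L^1}$, as in \cite[Prop.~5.3]{sim1988}. Normalizing $u_i=s_{t_i}/\Vert s_{t_i}\Vert_{L^1}$ and using $\M_{\lambda,\xi}(h_0,h_t)\le 0$ together with Simpson's integral formula for $\M_\lambda$ (along the curve $e^{\tau s}$), we obtain a weak $L^2_1$ limit $u_\infty\ne 0$. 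It satisfies
\[
\int_X\tr\lb \Lambda R^{h_0}u_\infty\rb-\lambda\,\tr u_\infty\,dV+\int_X\langle\Psi(u_\infty)\bar\pa u_\infty,\bar\pa u_\infty\rangle\,dV\le 0 .
\]
The crucial point is the contribution of $\int\tr(\xi h^{-1})$. Since $\tr(\xi h_0^{-1}e^{-s})$ grows exponentially along negative eigendirections of $s$, the $\xi$-term forbids $s$ from being very negative, and the limiting eigenvalues satisfy $u_\infty\ge 0$. Uhlenbeck--Yau regularity then produces weakly holomorphic projections $\pi_j$ onto the eigen-subsheaves of $u_\infty$. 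The inequality above becomes a weighted combination of
\[
\deg(E/\mathcal F_j)-\lambda\,\mathrm{rk}(E/\mathcal F_j)\le 0
\]
for the quotients; here the constraint $u_\infty\ge 0$ is exactly why quotient slopes, rather than $\mu(E)$, appear. This contradicts $\mu(E/\mathcal F)\ge\lambda_{\min}>\lambda$. Getting the sign bookkeeping and the $\xi$-term's coercivity correct is the delicate part.

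\emph{Convergence and uniqueness.} Given the $C^0$ bound, higher-order estimates proceed as in Theorem \ref{c2000}. Since $\M_{\lambda,\xi}$ is bounded below along the flow, we get $\int_0^\infty\Vert\Theta_t\Vert_{L^2}^2\,dt<\infty$. The parabolic inequality for $|\Theta|^2$ with the heat kernel then upgrades this to $\sup_X|\Theta_t|\to 0$, so subsequential limits solve \eqref{a50}.

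Uniqueness follows from the strict convexity \eqref{55} of $\M_{\lambda,\xi}$ along geodesics: $\eta''\ge\int\tr(\rho^2\xi h_t^{-1})>0$ unless $\rho=0$. Alternatively, apply the maximum principle to $\sigma(h,h')$, which for two solutions satisfies $\Delta\sigma\ge$ a positive term unless $h=h'$. The limit is therefore unique, and the whole flow converges smoothly, which proves Theorem \ref{a105}.
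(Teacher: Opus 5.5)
\textbf{Verdict: correct, by a different route.} Your outline is sound. For the central $C^0$ estimate, and for convergence, you take a genuinely different and much heavier route than the paper.

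\textbf{The paper's route.} There is no blow-up analysis. The paper fixes $\hat h$ with $\Lambda_\omega R^{\hat h}-\lambda I_E>0$ and applies the parabolic maximum principle to $\tr H_t$ and $\tr\wt H_t$, where $H_t=h_t\hat h^{-1}$ and $\wt H_t=\hat h h_t^{-1}$. The twisting term does all the work. In the first inequality it enters as $\tr(\xi h_t^{-1}H_t)=\tr(\xi\hat h^{-1})$, which is bounded independently of $h_t$. In the second it enters as $-\tr(\wt H_t\xi h_t^{-1})\le-\frac{C_4}{r}(\tr\wt H_t)^2$, which is coercive. Once $h_t$ is uniformly equivalent to $h_0$, one has $\xi h_t^{-1}\ge\frac{C_6}{C_5}I_E$, hence $(\pa_t-\Delta)|K_t|^2\le-\delta|K_t|^2$. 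This gives exponential decay of $K_t$ and convergence of the whole flow at once. There is no need for $\int_0^\infty\Vert\Theta_t\Vert_{L^2}^2\,dt<\infty$, for heat-kernel smoothing, or for uniqueness to pass from subsequences to the full flow. Long-time existence likewise comes from the uniform estimates rather than from Donaldson's finite-time $\sigma$-argument.

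\textbf{What your route buys.} Your Uhlenbeck--Yau/Simpson contradiction argument is independent of $\hat h$. For $\lambda<\lambda_{\min}$, the existence of such an $\hat h$ is precisely the nontrivial inequality $\lambda_{\min}\le\sup\lambda$ in \eqref{08}, which the paper's argument takes as given from \cite{WYY3}. Your argument uses only the slope condition, through $u_\infty\ge0$ and the quotient terms, including the term $\mu_1(\deg E-\lambda r)$ from $\mathcal F=0$. It would therefore reprove both the solvability of \eqref{09} and the identity \eqref{08}, at the cost of the weakly holomorphic subbundle machinery. Your convexity argument also supplies the uniqueness that the paper's proof leaves implicit.

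\textbf{One step needs justification.} Simpson's Proposition 5.3 assumes a bound on $|\Lambda_\omega R^{h_t}|$. Before the $C^0$ estimate, your bound on $\Theta_t$ does not give one, since $\xi h_t^{-1}$ is uncontrolled. The estimate $\sup|s|\le C_1+C_2\Vert s\Vert_{L^1}$ still holds. In the subharmonicity inequalities for $\tr(h_th_0^{-1})$ and $\tr(h_0h_t^{-1})$, the $\xi$-term appears only in two harmless forms:
\begin{itemize}
\item as the bounded quantity $-\tr(\xi h_0^{-1})$;
\item as $+\tr(h_0h_t^{-1}\xi h_t^{-1})\ge0$, which has a favorable sign.
\end{itemize}
So the mean-value argument goes through, but you should state this explicitly rather than cite Proposition 5.3 directly.
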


\begin{proof}
Note that equation \eqref{a100} is equivalent to $\dot h_t=-\Lambda_\omega R^{h_t}h_t+\lambda h_t+\xi$. If we linearize this equation, then we get a strictly parabolic equation, hence the short time existence follows.

Now we need to slightly perturb the modified Donaldson functional to adapt to the current situation. We let
\[
\M_{\xi,\lambda}\lb h_0,h_1\rb=\int_0^1 \lb \int_X \tr \lb \dot h_t h_t^{-1}\lb \Lambda_\omega R^{h_t}-\lambda I_E-\xi h_t^{-1}\rb\rb dV\rb dt
\]
and set $K_t=\Lambda_\omega R^{h_t}-\lambda I_E-\xi h_t^{-1}$. Then, it follows from a direct computation that
\begin{align*}
\begin{split}
\frac{d}{dt}\M_{\xi,\lambda}\lb h_0,h_t\rb & =-\int_X \tr\lb\frac{\pa h_t}{\pa t}h_t^{-1} \lb \Lambda_\omega R^{h_t}-\lambda I_E-\xi h_t^{-1}\rb\rb dV \\
& =-\int_X \Vert K_t\Vert_{h_t}^2 dV\leq 0.
\end{split}
\end{align*}
Namely, the new modified Donaldson functional is decreasing along the flow.

\medskip
\noindent $\bullet$  \textbf{The $C^0$ estimate.}

\smallskip

We fix a Hermitian metric $\hat h$ on $E$ such that $\Lambda_\omega R^{\hat h}-\lambda I_E>0$ and let $H_t=h_t\hat h^{-1}$. Then, we have
\begin{eqnarray}\label{a140}
\Delta \lb \tr H_t\rb \geq \tr \lb H_t \Lambda_\omega R^{\hat h}\rb-\tr \lb H_t \Lambda_\omega R^{h_t}\rb
\end{eqnarray}
and
\[
\frac{d}{dt} \lb \tr H_t\rb =-\tr \lb \lb\Lambda_\omega R^{h_t}-\lambda I_E\rb H_t \rb+\tr\lb \xi \hat h^{-1}\rb.
\]
It follows that
\begin{eqnarray}\label{a150}
\lb \frac{d}{dt}-\Delta\rb\tr H_t\leq \tr\lb \xi  \hat h^{-1}\rb-\tr \lb H_t \lb\Lambda_\omega R^{\hat h}-\lambda I_E\rb\rb.
\end{eqnarray}
Note that both $H_t$ and $\Lambda_\omega R^{\hat h}-\lambda I_E$ are positive. We pick
\[C_1=\sup_X \tr\lb \xi  \hat h^{-1}\rb>0 \quad \rm{and} \quad C_2=\inf_X \lambda_{min}\lb \Lambda_\omega R^{\hat h}-\lambda I_E\rb>0,\]
where $\lambda_{min}\lb \Lambda_\omega R^{\hat h}-\lambda I_E\rb$ is the smallest eigenvalue of $\Lambda_\omega R^{\hat h}-\lambda I_E$. Then,  \eqref{a150} implies that
\[
\lb \frac{d}{dt}-\Delta\rb\tr H_t\leq C_1-C_2 \ \!\tr H_t.
\]
Hence, by the maximum principle,  we conclude  that $\tr H_t$ is bounded from above uniformly.

Next, we let $\wt H_t=\hat h h_t^{-1}$. By using a similar argument as above, we obtain that
\begin{eqnarray}\label{a160}
\lb \frac{d}{dt}-\Delta\rb\tr \wt H_t\leq\tr \lb \wt H_t \lb\Lambda_\omega R^{\hat h}-\lambda I_E\rb\rb-\tr\lb \wt H_t\xi
 h_t^{-1}\rb.
\end{eqnarray}
Since $\wt H_t$ is positive,  if we let $C_3=\sup_X \lambda_{max}\lb \Lambda_\omega R^{\hat h}-\lambda I_E\rb>0$ and pick $C_4>0$ such that $\xi\geq C_4 \hat h$, then \eqref{a160} implies that
\begin{align*}
\begin{split}
\lb \frac{d}{dt}-\Delta\rb\tr \wt H_t & \leq C_3 \tr \wt H_t-C_4 \tr\lb \wt H_t^2\rb \\
& \leq C_3 \tr \wt H_t-\frac{C_4}{r} \lb \tr \wt H_t\rb^2.
\end{split}
\end{align*}
Again, by the maximum principle,  we see that $\tr\wt H_t$ is uniformly bounded. These imply that $h_t$ is uniformly equivalent to $\hat h$ which in turn implies that there exists a constant $C_5>0$ such that
\begin{eqnarray}\label{a170}
C_5^{-1}h_0\leq h_t\leq C_5 h_0.
\end{eqnarray}

\medskip

\noindent $\bullet$ \textbf{The higher order estimates.}

\smallskip
Note that, since $K_t$ is $h_t$-self adjoint, $\Vert K_t\Vert_{h_t}^2=\tr\lb K_t^2\rb$. A simple computation shows that
\begin{eqnarray}\label{a200}
\frac{d \tr\lb K_t^2\rb}{dt}-\Delta \tr\lb K_t^2\rb=-2\tr \lb \xi h_t^{-1}K_t^2\rb-2 \Vert \nabla^{h_t}K_t \Vert_{h_t}^2\leq 0.
\end{eqnarray}
It follows from the maximum principle that $\sup_X \Vert K_t\Vert_{h_t}$ is decreasing in $t$. In particular,
\smallskip
\begin{eqnarray}\label{a300}
\sup_X \Vert K_t\Vert_{h_t}\leq \sup_X \Vert K_0\Vert_{h_0}.
\end{eqnarray}
Since
\[
\Lambda_\omega R^{h_t}=K_t+\lambda I_E+ \xi  h_t^{-1}
\]
the above estimates imply that the right hand side of this identity is uniformly bounded.
By multiplying $h_t$ to both sides of the above identity and using the $C^0$ estimate, we get the uniform $W^{2,p}$ bound for any $p>0$. Thus, the uniform $C^{1, \ga}$ estimate follows from the Sobolev embedding as we take $p$ sufficiently large.
Moreover, the $C^{k,\ga}$ estimates of $h_t$ follow from the standard Schauder theory.

Finally, if we differentiate \eqref{a100} with respect to $t$ repeatedly and by using the above uniform $C^{k,\ga}$ estimates, then we get uniform estimates of $h_t$ with respect to $t$. This leads to the long time existence of the flow \eqref{a100}.

\medskip
\noindent $\bullet$\textbf{ The exponential convergence.}

\smallskip
By \eqref{a200}, we have
\[
\frac{d \tr\lb K_t^2\rb}{dt}-\Delta \tr\lb K_t^2\rb\leq -2\tr \lb \xi h_t^{-1}K_t^2\rb.
\]
Let $C_6=\inf_X \lambda_{min}\lb \xi h_0^{-1}\rb>0$. Then, \eqref{a170} implies that
\[\xi h_t^{-1}\geq \frac{C_6}{C_5}I_E.
\]
It then follows that
\[
\frac{d \tr\lb K_t^2\rb}{dt}-\Delta \tr\lb K_t^2\rb\leq -\frac{2C_6}{C_5}\tr\lb K_t^2\rb.
\]
Namely,
\[
\lb \frac{d}{dt}-\Delta\rb \Vert K_t\Vert_{h_t}^2\leq -\gd \Vert K_t\Vert_{h_t}^2
\]
where $\gd=\frac{2C_6}{C_5}$. Thus, the maximum principle implies
\[
\sup_X \Vert K_t\Vert_{h_t}^2\leq e^{-\gd t}\sup_X \Vert K_0\Vert_{h_0}^2:=M e^{-\gd t}.
\]
On the other hand, we have
\[
\left \Vert \frac{dh_t}{dt}\right \Vert_{h_t}^2 =\tr \lb \frac{dh_t}{dt}h_t^{-1}\rb^2=\Vert K_t\Vert_{h_t}^2,
\]
which implies that, for $0\leq t_1<t_2$,
\[
d\lb h_{t_1}, h_{t_2}\rb\leq \int_{t_1}^{t_2} \left \Vert \frac{dh_t}{dt}\right \Vert_{h_t}dt\leq
\frac{2\sqrt M}{\gd}\lb e^{-\frac{\gd t_1}{2}}-e^{-\frac{\gd t_2}{2}}\rb.
\]
This implies that $h_t\to h_\infty$ exponentially. Therefore, by the uniform $C^0$ and $C^{k,\ga}$ estimates,  we know that $h_\infty$ is nondegenerate and the convergence is smooth. Furthermore,
\[
\Lambda_\omega R^{h_\infty}-\lambda I_E-\xi h_{\infty}^{-1}=K_\infty=\lim_{t\to\infty} K_t=0.
\]
Namely, $\Lambda_\omega  R^{h_\infty}=\lambda I_E+\xi h_{\infty}^{-1}$.

\end{proof}

\subsection{The Higgs Bundle Case}

We now generalize Theorem \ref{05} by allowing Higgs fields. In fact, our proof of Theorem \ref{a105} can be adapted to this case.

Let $(E, \Phi)$ be a Higgs bundle. Define 

\[\lambda_{\min}^\Phi=\inf_{\mathcal F}\mu\lb E/\mathcal F\rb,
\] where the infimum is taken over all proper $\Phi$-invariant saturated torsion-free subsheaves of $E$.

\begin{theorem}\label{b200}
We have
\[
\lambda_{\min}^\Phi=\sup_{\Lambda_\omega \lb R^h+\lsb \Phi,\Phi^{*,h}\rsb\rb>\lambda I_E} \lambda
\]
where the supremum is taken over all Hermitian metrics on $E$. 

Furthermore, for any $\lambda<\lambda_{\min}^{\Phi}$ and Hermitian metric $\xi$ on $E$, the flow

\begin{eqnarray}\label{b100}
\begin{cases}
\frac{\pa h_t}{\pa t}h_t^{-1}= -\lb \Lambda_\omega\lb R^{h_t}+\lsb \Phi,\Phi^{*,h_t}\rsb\rb-\lambda I_E-\xi h_t^{-1}\rb\\
h(0)=h_0.
\end{cases}
\end{eqnarray}
exists for all time and converges smoothly to the unique solution of \[ \Lambda_\omega \lb R^h+\lsb \Phi,\Phi^{*,h}\rsb\rb=\lambda I_E+\xi h^{-1}.\]
\end{theorem}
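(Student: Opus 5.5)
The proof follows the proof of Theorem \ref{a105}, with $\Lambda_\omega R^h$ replaced throughout by the Higgs tensor $\Lambda_\omega F^h:=\Lambda_\omega\lb R^h+\lsb\Phi,\Phi^{*,h}\rsb\rb$. It has two parts: the flow statement, and then the identity for $\lambda^\Phi_{\min}$, which we deduce from the flow statement together with a Chern--Weil argument.

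\textbf{Easy inequality.} Suppose $h$ satisfies $\Lambda_\omega F^h>\lambda I_E$, and let $\mathcal F\subset E$ be a proper $\Phi$-invariant saturated subsheaf with quotient $Q=E/\mathcal F$. We use the Chern--Weil formula for the quotient, which is valid off the singular set, of codimension $\geq 2$, and extends as in Simpson \cite{sim1988}. It gives
\[
\deg Q=\frac1\pi\int_X\lb \tr\lb(I-\pi_{\mathcal F})\Lambda_\omega R^h\rb+|\pa\pi_{\mathcal F}|^2\rb dV .
\]
The Higgs term enters through $\Phi$-invariance: $(I-\pi)\Phi\pi=0$, so
\[
\tr\lb(I-\pi)\Lambda_\omega[\Phi,\Phi^{*}]\rb=-|\pi\Phi(I-\pi)|^2\le 0 .
\]
Hence $\tr\lb(I-\pi)\Lambda_\omega R^h\rb\geq\tr\lb(I-\pi)\Lambda_\omega F^h\rb>\lambda\,\operatorname{rk}Q$. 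This gives $\mu(Q)>\lambda\pi/\operatorname{Vol}$, i.e.\ $\mu(Q)>\lambda$ in the normalized units of the paper. Taking the infimum over $\mathcal F$ and the supremum over $h$ yields $\sup\lambda\le\lambda^\Phi_{\min}$.

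\textbf{Reverse inequality.} Fix $\lambda<\lambda^\Phi_{\min}$ and set $\lambda'=(\lambda+\lambda^\Phi_{\min})/2$. Granting the flow statement for $\lambda'$ and $\xi=h_0$, we obtain a metric $h$ with
\[
\Lambda_\omega F^h=\lambda' I+\xi h^{-1}>\lambda' I>\lambda I ,
\]
so $\sup\lambda\geq\lambda^\Phi_{\min}$. There is a circularity: the $C^0$ estimate for the flow needs a background metric $\hat h$ with $\Lambda_\omega F^{\hat h}>\lambda I$. So the existence of $\hat h$ for every $\lambda<\lambda^\Phi_{\min}$ must be proved independently. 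This is the main obstacle. For it I would invoke the Higgs analogue of Wang--Yang--Yau's argument \cite{WYY3}, or run a continuity/limit argument: consider the flow with $\xi=\epsilon h_0$, and if the $C^0$ bound fails, rescale and extract, in Uhlenbeck--Yau/Simpson fashion, a weakly holomorphic $\Phi$-invariant projection. The weak limit of the normalized $\log$ of the metric yields such a subsheaf $\mathcal F$ with $\mu(E/\mathcal F)\le\lambda<\lambda^\Phi_{\min}$, a contradiction. The Higgs term contributes the nonnegative quantity $|[\Phi,s]|^2$ to the relevant functional, which is exactly what makes the limiting subsheaf $\Phi$-invariant, as in Simpson.

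\textbf{Flow part, given $\hat h$.} Short-time existence holds because the Higgs term has lower order. Along the flow, the functional $\M^H_{\xi,\lambda}$ is nonincreasing with derivative $-\int\|K_t\|^2$, where
\[
K_t=\Lambda_\omega F^{h_t}-\lambda I-\xi h_t^{-1} .
\]

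\emph{$C^0$ estimate.} Let $H_t=h_t\hat h^{-1}$. The Higgs analogue of \eqref{a140} is
\[
\Delta\tr H_t\geq\tr\lb H_t\Lambda F^{\hat h}\rb-\tr\lb H_t\Lambda F^{h_t}\rb ,
\]
which holds because the Higgs contribution is again a nonnegative square (Simpson's Lemma 3.1). The same inequality holds for $\tr\wt H_t$ with $\wt H_t=\hat h h_t^{-1}$. The maximum principle arguments then go through verbatim.

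\emph{Higher-order estimates and convergence.} The evolution inequality \eqref{a200} for $\tr K_t^2$ still holds; the extra term $-2|[\Phi,K_t]|^2$ has a good sign, and $\xi h_t^{-1}>0$. This gives the bound on $K_t$ and its exponential decay. With the $C^0$ bound, $|\Phi|_{h_t}$ is bounded, so the elliptic $W^{2,p}$ and Schauder bootstrap proceeds as before and yields smooth exponential convergence.

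\emph{Uniqueness.} Take two solutions and join them by a geodesic. Along it, $\M^H_{\xi,\lambda}$ has second derivative
\[
\int\lb|\bar\pa\rho|^2+|[\Phi,\rho]|^2+\tr(\rho^2\xi h_t^{-1})\rb dV ,
\]
which is strictly convex unless $\rho=0$. So the two solutions coincide.
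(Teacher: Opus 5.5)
Your flow argument is the paper's own sketch. It has the same ingredients: Simpson's inequality for $\Delta\tr H_t$ with the Higgs contribution a nonnegative square, the same maximum-principle bounds for $\tr H_t$ and $\tr\wt H_t$, the evolution of $\tr(K_t^2)$ with the extra term $-2\Vert[\Phi,K_t]\Vert^2_{h_t}$, and the bootstrap. Your Chern--Weil proof of $\sup\lambda\le\lambda^\Phi_{\min}$ and your uniqueness argument via geodesic convexity are correct additions that the paper does not write out. There is one slip: $\deg Q>\frac1\pi\lambda\operatorname{rk}(Q)\operatorname{Vol}$ gives $\mu(Q)>\lambda\operatorname{Vol}/\pi$, not $\lambda\pi/\operatorname{Vol}$.

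The genuine gap is the one you name and then leave open: for every $\lambda<\lambda^\Phi_{\min}$ you must produce $\hat h$ with $\Lambda_\omega F^{\hat h}>\lambda I_E$. This is the whole nontrivial half of the identity. Neither of your suggestions is an argument as it stands. For ``invoke the Higgs analogue of \cite{WYY3}'', no such result is among those cited; Theorem~\ref{410} already assumes positivity. ``Rescale and extract'' is only a gesture. As written, you prove the flow statement only for those $\lambda$ for which such an $\hat h$ is already known, and your reverse inequality is circular. The paper's sketch has exactly the same hole. It says ``we only need to show the convergence'' and then simply fixes $\hat h$, as in the proof of Theorem~\ref{a105}, where $\hat h$ is available only through \eqref{08}, i.e.\ through the result being reproved.

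Your second route does close the gap, with the given $\xi$ (no $\epsilon$ is needed). In the $C^0$ estimate, only the upper bound on $\tr H_t$ uses $\Lambda_\omega F^{\hat h}-\lambda I_E>0$. The inequality $\lb\frac{d}{dt}-\Delta\rb\tr\wt H_t\le C_3\tr\wt H_t-\frac{C_4}{r}\lb\tr\wt H_t\rb^2$ holds for an arbitrary background $\hat h$, with $C_3$ of either sign. Also $\tr H_t$ grows at most exponentially, so the flow exists for all time with $h_t\ge c\,\hat h$.

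Then $\xi h_t^{-1}$ stays uniformly bounded, and by the maximum principle for $\tr(K_t^2)$ so does $\Lambda_\omega F^{h_t}=K_t+\lambda I_E+\xi h_t^{-1}$. This is exactly what Simpson's estimate $\sup_X|s_t|\le C+C\Vert s_t\Vert_{L^1}$, for $h_t=\hat he^{s_t}$, requires. Moreover, $\M^H_{\xi,\lambda}$ differs from the $\lambda$-Donaldson functional by $\int_X\tr(\xi h^{-1})\,dV\ge0$ plus a constant, so the latter stays bounded above along the flow.

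Now suppose $\Vert s_{t_j}\Vert_{L^1}\to\infty$. The normalized limit $u_\infty\ge0$ has constant eigenvalues $\mu_1<\cdots<\mu_k$, and its spectral projections give proper $\Phi$-invariant saturated subsheaves $\mathcal F_i$. The Uhlenbeck--Yau--Simpson limiting inequality becomes
\[
\mu_1\lb\pi\deg E-\lambda rV\rb+\sum_{i=1}^{k-1}\lb\mu_{i+1}-\mu_i\rb\lb\pi\deg\lb E/\mathcal F_i\rb-\lambda\operatorname{rk}\lb E/\mathcal F_i\rb V\rb\le0,
\]
with $V=\operatorname{Vol}(X,\omega)$. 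For $\lambda<\lambda^\Phi_{\min}$ every term is nonnegative, and since $u_\infty\ne0$ at least one is positive. This step uses $\pi\mu(E)/V>\lambda$, so $\mathcal F=0$ must be admitted in the infimum; the statement needs this anyway, since tracing and integrating the equation forces it.

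The resulting contradiction gives the uniform $C^0$ bound, hence convergence as in the paper. The limit itself then supplies a metric with $\Lambda_\omega F^h>\lambda I_E$. Without an argument of this kind, neither your proposal nor the paper's sketch proves the stated identity.
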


\begin{proof}  [Sketch of Proof]

Similar to Theorem \ref{a105}, we only need to show the convergence of flow \eqref{b100}. We first note that, for any Hermitian metrics $\zeta, \eta$ on $E$, one has the inequality
\[
\tr \lb \zeta \eta^{-1}\lb \Lambda_\omega \lsb \Phi,\Phi^{*,\zeta}\rsb -\Lambda_\omega \lsb \Phi,\Phi^{*,\eta}\rsb\rb \rb\geq 0.
\]
For any Hermitian metric $h$, we let $F^h:=R^h+\lsb \Phi,\Phi^{*,h}\rsb-\lambda I_E$. Again, we fix $\hat h$ on $E$ such that \[\Lambda_\omega \lb R^{\hat h}+\lsb \Phi,\Phi^{*,\hat h}\rsb\rb-\lambda I_E>0.\]
Similarly, we let $H_t=h_t\hat h^{-1}$, where $h_t$ is the solution to equation \eqref{b100}, then, by inequality \eqref{a140},  we have
\begin{align*}
\begin{split}
\Delta \lb \tr H_t \rb \geq & \tr \lb H_t \Lambda_\omega F^{\hat h}\rb-\tr \lb H_t \Lambda_\omega F^{h_t}\rb +
\tr \lb H_t\lb \Lambda_\omega \lsb \Phi,\Phi^{*,h_t}\rsb -\Lambda_\omega \lsb \Phi,\Phi^{*,\hat h}\rsb\rb \rb\\
\geq & \tr \lb H_t \Lambda_\omega F^{\hat h}\rb-\tr \lb H_t \Lambda_\omega F^{h_t}\rb.
\end{split}
\end{align*}
The essentially same argument as in the proof of Theorem \ref{a105} shows that $\tr H_t$ is uniformly bounded. Similar arguments also show that $\tr \lb \hat h h_t^{-1}\rb$ is uniformly bounded and hence we obtain the uniform $C^0$ estimate.

To derive the higher order estimates,  we let $K_t=\Lambda_\omega F^{h_t}-\xi h_t^{-1}$ and notice that, parallel to formula \eqref{a200}, we have
\[
\frac{d \tr\lb K_t^2\rb}{dt}-\Delta \tr\lb K_t^2\rb=-2\tr \lb \xi h_t^{-1}K_t^2\rb-2 \Vert \nabla^{h_t}K_t \Vert_{h_t}^2-2 \Vert \lsb \Phi,K_t\rsb \Vert_{h_t}^2\leq 0.
\]
Also, we know that
\[
\Lambda_\omega R^{h_t}=K_t-\lsb\Phi,\Phi^{*,h_t}\rsb +\lambda I_E +\xi h_t^{-1}.
\]
By using the $C^0$ estimate, it follows that the right side of the above is uniformly bounded.
The rest of the proof is essentially the same as that of Theorem \ref{a105}.

\end{proof}


\begin{thebibliography}{99}



\bibitem {B et al} R. Berman, S. Boucksom, P. Eyssidieux, V. Guedj, and A. Zeriahi. K\"ahler-Einstein metrics and the K\"ahler-Ricci flow on log Fano varieties, {\it J. Reine Angew. Math.} 751 (2019), 27–89.


\bibitem{sd0}
S. K. Donaldson. A new proof of a theorem of Narasimhan and Seshadri. {\it J. Differential Geom.} 18 (1983), no. 2, 269–277.

\bibitem{sd1}
S. K. Donaldson. Anti self-dual Yang-Mills connections over complex algebraic surfaces and stable vector bundles. {\it Proc. London Math. Soc. (3)}, 50 (1):1--26, 1985.



\bibitem {sd2}
S. K. Donaldson. Infinite determinants, stable bundles and curvature. {\it Duke Math. J.}, 54 (1):231--247, 1987.

\bibitem{DR}
T. Darvas and Y. Rubinstein. Convergence of the K\"ahler-Ricci iteration, {\it Anal. PDE} 12 (3) (2019), 721-735.

\bibitem{WYY2}
J. Fan, M. Wang, X. Yang, and S.-T. Yau. Existence of Hermitian metrics with prescribed Hermitian- Yang-Mills Tensors II, arXiv: 2604.02679 (2026).


\bibitem{jk}
J. Keller. Ricci iterations on K\"ahler classes, {\it J. Inst. Math. Jussieu} 8:4 (2009), 743--768.

\bibitem{ko}
S. Kobayashi. Differential geometry of complex vector bundles, {\it Princeton University Press,} Princeton, NJ, 2014, xi + 304 pp.

\bibitem{LY} J. Li and S.-T. Yau. Hermitian-Yang-Mills connection on non-K\"ahler manifolds. {\it Mathematical aspects of string theory (San Diego, Calif., 1986)}, 560--573, Adv. Ser. Math. Phys., 1, {\it World Sci. Publishing, Singapore}, 1987


\bibitem{na}
A. Nadel. On the absence of periodic points for the Ricci curvature operator acting on the space of K\"ahler metrics. Modern methods in complex analysis (Princeton, NJ, 1992), 277–281,
Ann. of Math. Stud., 137, Princeton Univ. Press, Princeton, NJ, 1995.


\bibitem{yr}
Y. A. Rubinstein. Some discretizations of geometric evolution equations and the Ricci iteration on the space
of K\"ahler metrics, {\it Adv. Math. 218:5 (2008),} 1526–1565.

\bibitem {sim1988}
C. T. Simpson. Constructing variations of Hodge structure using Yang-Mills theory and applications to uniformization. {\it J. Amer. Math. Soc.}, 1 (4):867--918, 1988.


\bibitem{UY}
K. Uhlenbeck and S.-T. Yau. On the existence of Hermitian-Yang-Mills connections in stable vector bundles. volume 39, pages S257–S293. 1986. Frontiers of the mathematical sciences: 1985 (New York, 1985).

\bibitem{WYY1}
M. Wang, X. Yang, and S.-T. Yau. Existence of Hermitian metrics with prescribed Hermitian- Yang-Mills Tensors I, arXiv: 2603.10611 (2026).




\bibitem{WYY3}
M.~Wang, X.~Yang, and S.-T. Yau.
\newblock {E}xistence of {T}wisted {H}ermitian-{E}instein {M}etrics on
  {U}nstable {V}ector {B}undles.
\newblock arXiv:2606.15102 (2026).

\bibitem{yau}
S.-T. Yau. On the Ricci curvature of a compact Kähler manifold and the complex Monge-Amp\'ere equation. I. {\it Comm. Pure Appl. Math.} 31 (1978), no. 3, 339–411.

\bibitem{kz}
K. Zhang. The Ricci iteration towards cscK metrics, {\it Adv. Math. 475} (2025) 110340.

\end{thebibliography}

\end{document}